\documentclass[11pt,reqno]{amsart}

\usepackage[T1]{fontenc}
\usepackage{lmodern}
\usepackage{amsmath,amssymb,amsthm,mathtools}
\usepackage{microtype}
\usepackage[a4paper,margin=30mm]{geometry}
\usepackage[colorlinks=true,linkcolor=blue,citecolor=blue,urlcolor=blue]{hyperref}
\usepackage[nameinlink,noabbrev]{cleveref}

\numberwithin{equation}{section}

\newtheorem{theorem}{Theorem}[section]
\newtheorem{proposition}[theorem]{Proposition}
\newtheorem{lemma}[theorem]{Lemma}
\newtheorem{corollary}[theorem]{Corollary}
\theoremstyle{definition}

\theoremstyle{remark}
\newtheorem{remark}[theorem]{Remark}

\newcommand{\D}{\mathbb D}

\newcommand{\calH}{\mathcal{H}}
\newcommand{\HH}{\mathcal H}
\newcommand{\HD}{\mathcal H_{2}}
\newcommand{\Sp}{\mathcal S}
\newcommand{\ess}{\mathrm e}

\newcommand{\Mellin}{\mathcal M}

\title[The differential Hilbert operator between weighted Bergman spaces]
{The Differential Hilbert Operator Between Weighted Bergman Spaces}

\author{ Songxiao Li  and Mengmeng Zhou }

\address{Songxiao Li\\   Department of Mathematics, Shantou University, Shantou 515063, Guangdong, P.R. China.  }
\email{jyulsx@163.com}

 \thanks{Corresponding author: Songxiao Li.}

\address{Mengmeng Zhou\\   Department of Mathematics, Shantou University, Shantou 515063, Guangdong, P.R. China.  }
\email{25mmzhou@stu.edu.cn}

\date{}
\subjclass[2020]{30H20, 47B38} 

\begin{document}

\begin{abstract}
In this paper, a complete characterization of the boundedness, compactness, norm and essential norm of the differential Hilbert operator $\mathcal{H}_2:A^2_\alpha\to A^2_\beta$  is obtained.   More precisely, $\mathcal H_2:A^2_\alpha \to A^2_\beta$ is bounded if and only if $-1<\alpha<0$ and $\beta\geq\alpha+2$ and it is compact if and only if
$-1<\alpha<0$ and $\beta>\alpha+2$.  The norm and essential norm of $ \|\mathcal H_2\|_{A^2_\alpha\to A^2_{\beta}}$ are also investigated. In particular, when $\beta=\alpha+2$,   
\[
 \|\mathcal H_2\|_{A^2_\alpha\to A^2_{\alpha+2}}  =\|\mathcal H_2\|_{\mathrm e,A^2_\alpha\to A^2_{\alpha+2}}
 =\frac{\pi\sqrt{(\alpha+2)(\alpha+3)}}
 {\sin(\pi(\alpha+2)/2)},\qquad -1<\alpha<0.
\]
When $\beta>\alpha+2$, $\|\mathcal H_2\|_{\mathrm e,A^2_\alpha\to A^2_{\beta}}=0$.  Furthermore, for every $1\leq p<\infty$,  the operator  
$\mathcal H_2$ belongs to the Schatten class $\mathcal S_p$   if and only if it is compact.

\vskip 3mm \noindent{\it Keywords}: Differential Hilbert operator, weighted Bergman space, norm, essential norm, Schatten class.

\end{abstract}

\maketitle

\section{Introduction}

 The Hilbert matrix is an infinite matrix $\mathcal{H}=( \frac1{n+k+1})_{n,k\ge 0}$. Let $f(z)=\sum_{k\geq0}a_kz^k$ denote an analytic function on the unit disk. The corresponding Hilbert operator is defined as 
\begin{equation}\label{eq:H-series-intro}
 \HH f(z)=\sum_{n=0}^\infty \left(\sum_{k=0}^\infty\frac{a_k}{n+k+1}\right)z^n
 =\int_0^1\frac{f(t)}{1-tz}\,dt,
\end{equation}
 for any $f$ for which the integral converges. The boundedness and norm of the Hilbert operator on Hardy spaces, Bergman spaces and weighted Bergman spaces have been extensively studied. Specifically,   the boundedness and norm of the Hilbert operator on Hardy spaces were studied in \cite{DiamantopoulosSiskakis2000,DostanicJevticVukotic2008}, where the exact norm on $H^p$ was determined to be $
\left \| \mathcal{H} \right \|_{H^p \to H^p}=\frac{\pi}{\sin\frac{\pi}{p}}
$
for $1<p<\infty$. The boundedness and norm of the Hilbert operator on Bergman spaces were studied in \cite{BozinKarapetrovic2018,Diamantopoulos2004,DostanicJevticVukotic2008}, where the norm on $A^p$ was established as
$
\left \| \mathcal{H} \right \|_{A^p\to A^p}=\frac{\pi}{\sin\frac{2\pi}{p}}
$
for $2< p<\infty$. For weighted Bergman spaces $A_{\alpha}^p$, the boundedness and norm of the Hilbert operator were studied in \cite{BaoTianWulan2026,Dai2024,JevticKarapetrovic2017,Karapetrovic2018,LindstromMiihkinenWikman2021,Norrbo2025}. In \cite{Karapetrovic2018}, Karapetrovi\'{c} showed that 
$\mathcal{H} :A_{\alpha}^p\to A^p_\alpha$ is bounded if and only if    $p>\alpha+2$, and he conjectured that the norm of the Hilbert operator on $A_{\alpha}^p$ is
\begin{equation*}
\left \| \mathcal{H} \right \|_{A_{\alpha}^p\to A^p_\alpha}=\frac{\pi}{\sin\frac{(\alpha+2)\pi}{p}}
\end{equation*}
when $p>\alpha+2$. Recently, it was proved in \cite{WulanZhouZhu2026} that for even exponents $p=2m$, 
\begin{equation*}
\left \| \mathcal{H} \right \|_{A_{\alpha}^{2m}\to A_{\alpha}^{2m}}=B(a,1-a),\ \ \ a=\frac{\alpha+2}{2m}, 
\end{equation*}
provided that $0<a\le m/(2m-1)$. This norm identity remains valid for all admissible parameters when $p=2,4,6,8,10$. However, Karapetrovi\'{c}'s conjecture is false for every even integer $p=2m$ with $m\ge 550000$. Related results on the essential norm can be found in \cite{LindstromMiihkinenNorrbo2022}.

The present paper  investigates the differential Hilbert operator $\calH_{2}$, which is defined by 
\begin{equation*} 
 \HD f(z)=\int_0^1\frac{f(t)}{(1-tz)^2}\,dt,
\end{equation*} 
 for any $f$ for which the integral converges. Let $\mathcal P$ stand for the space of analytic polynomials. The preceding expression initially defines a linear mapping $\HD$ on $\mathcal P$.  The term ``differential'' originates from the identity 
\begin{equation}\label{eq:factor-intro}
 \HD f(z)=z(\HH f)'(z)+\HH f(z)=(z\HH f(z))'.
\end{equation}  The
operator $\calH_{2}$ is of independent interest for the following reasons.

First, the differential Hilbert operator $\calH_{2}$   can be regarded as a  special case of the general Hilbert-type operator $\calH_{g}$ associated with the symbol $g(z)=z/(1-z)$. Let $H(\mathbb{D})$ be the space of all analytic  functions in the unit disk $\mathbb{D}  $ of the complex plane $\mathbb{C}$.   For any $g\in H(\D)$,  Galanopoulos, Girela, Peláez and Siskakis \cite{GGPS} defined and investigated the general Hilbert-type operator  
\[
\calH_{g}f(z)=\int_{0}^{1}f(t)\,g'(tz)\,dt,\qquad f\in H(\D).
\]
The classical Hilbert operator $\mathcal{H}$ is recovered by taking $g(z)=\log\frac1{1-z}$. Set $\tilde{g}(z)=\dfrac{z}{1-z}$. Direct computation yields $\tilde{g}'(z)=(1-z)^{-2}$, which implies
\[
\calH_{\tilde{g}}f(z)=\int_{0}^{1}\frac{f(t)}{(1-tz)^{2}}\,dt=\calH_{2}f(z).
\]
The work \cite{GGPS} establishes the boundedness properties of $\calH_{g}$ acting on Hardy spaces $H^p$, Bergman spaces $A^p_\alpha$, and Dirichlet-type spaces $D^p_\alpha$. Further conclusions concerning the   operator $\calH_{g}$ can be found in \cite{GGPS,PR,Tang2026}.

Second, the differential Hilbert operator has a close connection with the Bergman projection. The Bergman kernel on $\mathbb{D}$ admits the expression
$K(z,w) = \frac{1}{(1-\bar{w}z)^2},$ and the Bergman projection is given by
\[
P f(z) = \int_\mathbb{D} \frac{f(w)}{(1-\bar{w}z)^2} dA(w).
\]
One may observe that $\mathcal{H}_{2}$ and the Bergman projection $P$ possess the identical kernel $(1-\cdot z)^{-2}$. Their distinction lies solely in the domain of integration: the former uses $[0,1)$ equipped with the Lebesgue measure, while the latter is defined over $\mathbb{D}$ with the normalized area measure.

 Finally, the differential Hilbert operator constitutes a special case of the derivative-Hilbert operator $\textit{I}_{\mu_\alpha}$ proposed by Ye and Zhou (see \cite{ys3}),
\begin{equation}
\textit{I}_{\mu_\alpha} f(z)=\int_{0}^{1}\frac{f(t)}{(1-tz)^{\alpha}}\,d\mu(t), \nonumber
\end{equation}
where $\mu$ denotes a positive Borel measure defined on $[0,1)$. Ye and Zhou~\cite{YeZhou2022} provided a characterization of positive Borel measures $\mu$ for which the operator $\textit{I}_{\mu_\alpha}$ acts as a bounded or compact mapping between Bergman spaces, with the associated criteria formulated via Carleson-type properties of $\mu$. Setting $\alpha=\beta+1$ and $d\mu(t)=(1-t)^\beta dt$, the operator $\textit{I}_{\mu_\alpha}$ degenerates into the generalized Hilbert operator $\mathcal{H}_{\beta}$ introduced in \cite{li, ls}. Additional results concerning derivative-Hilbert operators and generalized Hilbert operators can be found in \cite{li, ls, YeFeng2023, ys3, YeZhou2022}.

Whether for the general Hilbert-type operator $\calH_{g}$ or the derivative-Hilbert operator $\textit{I}_{\mu_\alpha}$, characterizations of them on various function spaces are currently limited to qualitative analysis, and quantitative analysis cannot be achieved. Analogous to the Hilbert operator, several new methods can be developed to perform quantitative characterization for the operator $\mathcal{H}_2$.  In this work, we investigate the boundedness, compactness, operator norm, essential norm, and Schatten class membership of the operator $\mathcal{H}_2$ acting between weighted Bergman spaces. We prove that the operator $\HD:A^2_\alpha\to A^2_\beta$ is bounded precisely when
$
-1<\alpha<0 ~\text{and} ~ \beta\geq\alpha+2.
$
The operator $\HD:A^2_\alpha\to A^2_\beta$ is compact if and only if
$
-1<\alpha<0~\text{and}~ \beta>\alpha+2.
$
For the critical case $\beta=\alpha+2$, we establish
\begin{equation*} 
 \|\HD\|_{A^2_\alpha\to A^2_{\alpha+2}} =\|\HD\|_{\ess, A^2_\alpha\to A^2_{\alpha+2}}
 =\frac{\pi\sqrt{(\alpha+2)(\alpha+3)}}
 {\sin(\pi(\alpha+2)/2)}.
\end{equation*}
When $\beta>\alpha+2$, the essential norm satisfies $\|\HD\|_{\ess, A^2_\alpha\to A^2_{\beta}}=0$. In addition, for any $1\leq p<\infty$, the inclusion $\HD\in\Sp_p(A^2_\alpha,A^2_\beta)$ holds if and only if
$
-1<\alpha<0~\text{and}~ \beta>\alpha+2.
$

 The paper is organized as follows.  Section 2 collects all preliminary material and lemmas needed later. In
Section 3 we prove the   boundedness and compactness theorem, calculate the exact norm and essential norm when $\beta=\alpha+2$ and obtain an eigenvalue characterization of the operator norm when $\beta>\alpha+2$.  Finally, section 4 studies the Schatten classes and calculates the Hilbert--Schmidt norm. 

In this paper, we write   \(X\asymp Y\)  if there exists a   constant \(C>0\) satisfying \(X\leq CY\) and \(Y\leq CX\).

\section{Preliminaries and auxiliary lemmas}
\label{sec:prelim}

If $T:H_1\to H_2$ denotes a compact operator acting between Hilbert spaces, then $|T|=(T^*T)^{1/2}$ forms a positive, compact, self-adjoint operator on $H_1$. The eigenvalues of $|T|$ are referred to as the singular values of $T$, which are arranged in non-increasing order with multiplicities taken into account,
\[
 s_1(T)\geq s_2(T)\geq\cdots\geq0.
\]
For $0<p<\infty$, the Schatten class $\Sp_p(H_1,H_2)$   consists of all compact operators satisfying
\begin{equation*} 
 \|T\|_{\Sp_p}
 =\left(\sum_{j=1}^\infty s_j(T)^p\right)^{1/p}<\infty.
\end{equation*}
This expression defines a norm whenever $p\geq1$, whereas it yields a quasi-norm for $0<p<1$. The case $p=1$ corresponds to the trace class, and $p=2$ corresponds to the Hilbert--Schmidt class. In the Hilbert--Schmidt setting, for any orthonormal basis $\left \{ e_{k} \right \}_{k\ge 0} $ of $H_1$, the identity
\begin{equation*} 
 \|T\|_{\Sp_2}^2=\sum_{k=0}^\infty\|Te_k\|_{H_2}^2
\end{equation*}
holds.

Let $m$ denote the Lebesgue area measure on $\mathbb{D}$, i.e.,
$
dm(z)=dxdy=rdrd\theta,\  \ z=x+iy=re^{i\theta}.
$
For $0<p<\infty$ and $\gamma>-1$, the weighted Bergman space $A_{\gamma}^{p}$ is defined as $H(\mathbb{D}) \cap  L^{p}(\mathbb{D},dm_{\gamma}(z))$, where $dm_{\gamma}(z)=(\gamma+1)(1-|z|^{2})^{\gamma}dm(z)$. For $f\in A_{\gamma}^{p}$, its norm is given by
\begin{equation*}
\left \| f \right \|_{A_{\gamma}^{p}}=\left(\frac{\gamma+1}{\pi}\int_{\mathbb{D}} |f(z)|^p(1-|z|^2)^{\gamma}dm(z) \right)^{\frac{1}{p}}. 
\end{equation*}
In this paper, we restrict our attention to the case $p=2$.

\begin{lemma}[\cite{Zhu2007}]\label{lem:monomial}
For $\gamma>-1$ and $n\geq0$, set $ w_n(\gamma)=\|z^n\|_{A^2_\gamma}^2.$ Then
\begin{equation}\label{eq:wnexact}
 w_n(\gamma)
 =\frac{\Gamma(\gamma+2)\Gamma(n+1)}
 {\Gamma(n+\gamma+2)}
\end{equation}
and
\begin{equation}\label{eq:wn-asymp}
 w_n(\gamma)=\Gamma(\gamma+2)n^{-\gamma-1}
 \bigl(1+O(n^{-1})\bigr),\qquad n\to \infty.
\end{equation}
\end{lemma}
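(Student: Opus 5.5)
The plan is to compute $w_n(\gamma)$ directly in polar coordinates and then read off the asymptotics from Gamma-function ratio estimates. Writing $z=re^{i\theta}$ and using $dm(z)=r\,dr\,d\theta$, the angular integral contributes $2\pi$. Hence
\[
 w_n(\gamma)=\frac{\gamma+1}{\pi}\cdot 2\pi\int_0^1 r^{2n+1}(1-r^2)^{\gamma}\,dr .
\]
Substituting $s=r^2$, so that $ds=2r\,dr$, turns this into
\[
 w_n(\gamma)=(\gamma+1)\int_0^1 s^{n}(1-s)^{\gamma}\,ds=(\gamma+1)B(n+1,\gamma+1)=\frac{(\gamma+1)\Gamma(n+1)\Gamma(\gamma+1)}{\Gamma(n+\gamma+2)}.
\]
Since $(\gamma+1)\Gamma(\gamma+1)=\Gamma(\gamma+2)$, this is exactly \eqref{eq:wnexact}. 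The Beta integral converges because $\gamma>-1$.

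For \eqref{eq:wn-asymp}, it suffices to show
\[
 \frac{\Gamma(n+1)}{\Gamma(n+\gamma+2)}=n^{-\gamma-1}\bigl(1+O(n^{-1})\bigr).
\]
This is the standard ratio asymptotic $\Gamma(x+a)/\Gamma(x+b)=x^{a-b}\bigl(1+O(x^{-1})\bigr)$ as $x\to\infty$, applied with $x=n$, $a=1$, $b=\gamma+2$. To justify it, I would use Stirling's formula in the form
\[
 \log\Gamma(x+c)=\bigl(x+c-\tfrac12\bigr)\log x-x+\tfrac12\log(2\pi)+O(x^{-1}).
\]
Subtracting the two instances leaves $(a-b)\log x+O(x^{-1})$, and exponentiating gives the claim.

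The only point that needs care is that the $O(x^{-1})$ remainder in Stirling's formula holds for $\log\Gamma(x+c)$ with fixed $c$. This follows from expanding $(x+c-\tfrac12)\log(1+c/x)$, and it is routine. So there is no real obstacle: the whole lemma reduces to a Beta integral plus Stirling's formula.
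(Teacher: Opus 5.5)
Your proof is correct: the polar-coordinate reduction to $(\gamma+1)B(n+1,\gamma+1)$, followed by the Stirling-based estimate $\Gamma(x+a)/\Gamma(x+b)=x^{a-b}\bigl(1+O(x^{-1})\bigr)$, is the standard argument. The paper gives no proof of this lemma and simply cites Zhu's book, where this computation appears; the paper also uses the same gamma-quotient estimate in the proof of Lemma~\ref{lem:D}. Your Stirling step is also right: the constant $c$ produced by expanding $(x+c-\tfrac12)\log(1+c/x)$ cancels the $-c$, so the remainder is indeed $O(x^{-1})$.
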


Using the above formula for $w_n(\gamma)$,  an orthonormal basis of $A^2_\gamma$ is given by
\begin{equation}\label{eq:orthonormal-basis}
 e_n^{(\gamma)}(z)=\frac{z^n}{\sqrt{w_n(\gamma)}},
 \qquad n=0,1,2,\ldots.
\end{equation}

For $\alpha$ and $\beta$ in $(0,\infty)$, the Beta function $B(\alpha,\beta)$ is defined by
\begin{equation*}
B(\alpha,\beta)=\int_{0}^{1}t^{\alpha-1}(1-t)^{\beta-1}\,dt.
\end{equation*}
If $0<\alpha<1$, then Euler's reflection formula gives
\begin{equation*}
B(\alpha,1-\alpha)=\frac{\pi}{\sin\pi \alpha}.
\end{equation*}

The boundedness and norm of the classical Hilbert operator acting on $A^2_\alpha$ are characterized by the following result, which can be found  in \cite{WulanZhouZhu2026}.

\begin{lemma} \label{lemma:Hnorm}
The Hilbert operator $\HH$ is bounded on $A^2_\alpha$ if and only if
$-1<\alpha<0$.  In this range,
\begin{equation*} 
 \|\HH\|_{A^2_\alpha\to A^2_\alpha}
 =B\left(\frac{\alpha+2}{2},1-\frac{\alpha+2}{2}\right)
 =\frac{\pi}{\sin(\pi(\alpha+2)/2)}.
\end{equation*}
\end{lemma}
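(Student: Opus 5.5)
The statement is Lemma~\ref{lemma:Hnorm}: $\HH$ is bounded on $A^2_\alpha$ iff $-1<\alpha<0$, with norm $\pi/\sin(\pi(\alpha+2)/2)$. The plan is to express $\HH$ in the orthonormal basis \eqref{eq:orthonormal-basis}, derive the upper bound from a weighted Schur test for the Hilbert matrix, and derive the lower bound from test functions $(1-z)^{-s}$ with $s$ close to the critical exponent. Throughout, put $a=(\alpha+2)/2$, so that $-1<\alpha<0$ corresponds to $1/2<a<1$.

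\emph{Necessity.} If $\alpha\ge 0$, take $f(z)=(1-z)^{-(\alpha+2)/2}\bigl(\log\frac{e}{1-z}\bigr)^{-1}$, which lies in $A^2_\alpha$. For this $f$ the integral $\int_0^1 f(t)\,dt$ diverges because the exponent $a\ge1$. Hence $\HH f$ is not even defined, since $\HH f(0)=\int_0^1 f(t)\,dt$, so $\HH$ cannot be bounded. If $\alpha\le -1$, the space $A^2_\alpha$ is not defined, so nothing needs to be checked there.

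\emph{Upper bound.} Write $f=\sum a_k z^k$. Then $\|f\|^2=\sum |a_k|^2 w_k(\alpha)$ and $\|\HH f\|^2=\sum_n w_n(\alpha)\bigl|\sum_k a_k/(n+k+1)\bigr|^2$. With $b_k=a_k\sqrt{w_k}$, the claim becomes that the matrix $M_{nk}=\sqrt{w_n/w_k}\,/(n+k+1)$ has $\ell^2$-norm at most $B(a,1-a)$. I would first use the integral form to get a cleaner estimate. By Cauchy--Schwarz with weight $t^{a-1}$-type functions, i.e.\ Schur's test, I would look for $h$ with
\[
\sum_k M_{nk}h_k\le B(a,1-a)\,h_n .
\]
By \eqref{eq:wn-asymp}, $w_n\approx \Gamma(\alpha+2)n^{-2a+1}$, so $M_{nk}\approx (k/n)^{a-1/2}/(n+k)$. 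With $h_k=k^{-1/2}$ and comparison to integrals, this gives $\int_0^\infty x^{a-1}/(1+x)\,dx=B(a,1-a)$. That only works asymptotically, though, and the exact statement needs an exact test sequence. I would take $h_k=\Gamma(k+c)/\Gamma(k+1)$ with suitably chosen $c$ and use exact Beta-integral identities: write $1/(n+k+1)=\int_0^1 t^{n+k}\,dt$ and $w_k$ as Gamma ratios, then sum the resulting hypergeometric series in closed form. The exponent $c$ should be tuned so that the row sums telescope, or are bounded by their limit, uniformly in $n$. This exact Schur estimate is the main obstacle. If the discrete version resists, the fallback is the continuous route: the change of variables $t\mapsto \varphi_t(z)$ writes $\HH f=\int_0^1 T_t f\,dt$ with weighted composition operators $T_t$, whose norms can be computed exactly (the Diamantopoulos approach). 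Integrating $\|T_t\|$ over $t$ should then give precisely $\int_0^1 t^{a-1}(1-t)^{-a}\,dt=B(a,1-a)$.

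\emph{Lower bound.} Test on $f_\varepsilon(z)=(1-z)^{-a+\varepsilon}$, normalized. As $\varepsilon\to0$ the norm $\|f_\varepsilon\|^2$ blows up like $1/\varepsilon$. For $\HH f_\varepsilon$, split the defining integral at $t$ near $1$ and use the asymptotics near $z=1$. This gives $\HH f_\varepsilon(z)\approx B(a-\varepsilon,1-a+\varepsilon)(1-z)^{-a+\varepsilon}$ plus a term that is $o(\|f_\varepsilon\|)$. Therefore $\|\HH f_\varepsilon\|/\|f_\varepsilon\|\to B(a,1-a)$, and combined with the upper bound this yields equality.
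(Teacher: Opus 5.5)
The paper does not prove this lemma; it quotes it from the work of Wulan, Zhou and Zhu. Your proposal therefore has to stand on its own, and it does not. The upper bound $\|\mathcal H\|_{A^2_\alpha\to A^2_\alpha}\le B(a,1-a)$, which is the whole difficulty, is never established.

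Your Schur test is only heuristic: it rests on a comparison with $\int_0^\infty x^{a-1}(1+x)^{-1}dx$. Since $M_{nk}=\sqrt{w_n/w_k}/(n+k+1)$ is not symmetric, you would also need a column condition besides the row condition you wrote. The exact version is precisely the step you postpone. To see what it demands, take the natural coefficient test sequence $Q_k=\Gamma(k+a)/\Gamma(k+1)$. One Schur condition is then an identity:
\[
\sum_{k\ge0}\frac{Q_k}{n+k+1}=\Gamma(a)\int_0^1 s^n(1-s)^{-a}\,ds=\frac{\pi}{\sin\pi a}\,P_n,\qquad P_n=\frac{\Gamma(n+1)}{\Gamma(n+2-a)}.
\]
The other condition, $\sum_n w_nP_n/(n+k+1)\le \frac{\pi}{\sin\pi a}\,w_kQ_k$, becomes
\[
\sum_{n\ge0}\frac{1}{n+k+1}\,\frac{\Gamma(n+1)^2}{\Gamma(n+2a)\Gamma(n+2-a)}\le\frac{\pi}{\sin\pi a}\,\frac{\Gamma(k+a)}{\Gamma(k+2a)}\qquad(k\ge0).
\]
This does not telescope. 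Both sides are asymptotic to $\frac{\pi}{\sin\pi a}k^{-a}$, so it is sharp at leading order and cannot come from a crude sum--integral comparison. Nothing in your sketch proves an inequality of this kind.

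Your fallback is not just incomplete but false. Write $T_tf(z)=(1-(1-t)z)^{-1}f\bigl(t/(1-(1-t)z)\bigr)$, so that $\mathcal Hf=\int_0^1T_tf\,dt$. Then $T_t1=(1-(1-t)z)^{-1}$, and by Lemma~\ref{lem:monomial},
\[
\|T_t\|^2\ge\|T_t1\|_{A^2_\alpha}^2=\sum_{n\ge0}w_n(\alpha)(1-t)^{2n}\sim\Gamma(\alpha+2)\Gamma(-\alpha)\,(2t)^{\alpha}\qquad(t\to0^+),
\]
while $\bigl(t^{a-1}(1-t)^{-a}\bigr)^2\sim t^{\alpha}$. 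The ratio tends to $2^\alpha\Gamma(\alpha+2)\Gamma(-\alpha)$, which equals $\pi/(2\sqrt2)>1$ at $\alpha=-1/2$ and tends to $+\infty$ as $\alpha\uparrow0$.

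On the other hand, $T_t(1-z)^{-s}=(1-t)^{-s}(1-(1-t)z)^{s-1}(1-z)^{-s}$. Letting $s\uparrow a$, using your own lower-bound test functions, gives $\|T_t\|\ge t^{a-1}(1-t)^{-a}$ for every $t$. Hence for $\alpha=-1/2$ (and all $\alpha$ near $0$),
\[
\int_0^1\|T_t\|\,dt>\int_0^1t^{a-1}(1-t)^{-a}\,dt=B(a,1-a).
\]
So Minkowski's inequality applied to this decomposition cannot give the sharp constant at $p=2$; a genuinely finer argument is needed.

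The remaining parts are essentially right. The lower bound becomes rigorous through $\mathcal H[(1-z)^{-s}]=(1-z)^{-s}G_s$, where $G_s(z)=\int_0^1(1-t)^{-s}(1-(1-t)z)^{s-1}\,dt$ is continuous on $\overline{\mathbb D}$ and $G_s(1)=B(s,1-s)$. Necessity is cleaner via the functional $p\mapsto(\mathcal Hp)(0)=\sum_k a_k/(k+1)$ on polynomials. Its squared norm is $\sum_k((k+1)^2w_k(\alpha))^{-1}$, which diverges for $\alpha\ge0$, exactly as in the proof of Theorem~\ref{thm:bdd}.
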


For $-1/2<s<1/2$, define
\begin{equation}\label{eq:Ks}
 K_sF(x)=\int_0^\infty\frac{(y/x)^s}{x+y}F(y)\,dy,
 \qquad x>0.
\end{equation}

\begin{lemma}\label{lem:Ks-norm}
If $-1/2<s<1/2$, then $K_s$ is bounded on $L^2(0,\infty)$ and
\begin{equation}\label{eq:Ks-norm}
 \|K_s\|_{L^2(0,\infty)\to L^2(0,\infty)}
 =\frac{\pi}{\cos(\pi s)}.
\end{equation}
\end{lemma}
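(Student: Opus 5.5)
The plan is to diagonalize $K_s$ with the Mellin transform; this is the standard route for operators whose kernels are homogeneous of degree $-1$. The kernel $k(x,y)=\frac{(y/x)^s}{x+y}$ satisfies $k(\lambda x,\lambda y)=\lambda^{-1}k(x,y)$, so $K_s$ commutes with the unitary dilations $F\mapsto \lambda^{1/2}F(\lambda\,\cdot)$ on $L^2(0,\infty)$. Under the unitary map $U:L^2(0,\infty)\to L^2(\mathbb R)$, $UF(u)=e^{u/2}F(e^u)$, the operator $K_s$ becomes a convolution operator. Indeed, with $x=e^u$ and $y=e^v$,
\[
UK_sU^{-1}G(u)=\int_{\mathbb R}\frac{e^{(v-u)s}e^{(u+v)/2}}{e^u+e^v}\,G(v)\,dv=\int_{\mathbb R}\kappa(u-v)G(v)\,dv,
\]
where
\[
\kappa(t)=\frac{e^{-ts}}{e^{t/2}+e^{-t/2}}=\frac{e^{-ts}}{2\cosh(t/2)}.
\]

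Since $|s|<1/2$, the kernel $\kappa$ is positive and integrable on $\mathbb R$. The convolution operator is therefore bounded on $L^2(\mathbb R)$, and by Plancherel its norm equals $\sup_\xi|\hat\kappa(\xi)|$. Because $\kappa\ge0$, this supremum is attained at $\xi=0$:
\[
|\hat\kappa(\xi)|\le\int_{\mathbb R}\kappa=\hat\kappa(0).
\]
Hence $\|K_s\|=\int_{\mathbb R}\kappa(t)\,dt$.

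To evaluate this integral, I would return to the multiplicative variable. It equals
\[
\int_0^\infty\frac{y^{s-1/2}}{1+y}\,dy,
\]
which is $\int_0^\infty \frac{y^{a-1}}{1+y}\,dy$ with $a=s+\tfrac12\in(0,1)$. This is $B(a,1-a)=\frac{\pi}{\sin(\pi a)}$ by the reflection formula already recalled. Finally, $\sin(\pi(s+1/2))=\cos(\pi s)$, which gives \eqref{eq:Ks-norm}.

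The only point that needs care is the lower bound $\|K_s\|\ge\hat\kappa(0)$. One has to check that the multiplier $\hat\kappa$ is continuous, which it is since $\kappa\in L^1$, so its supremum is approached by testing on functions whose Fourier transforms concentrate near $0$. Alternatively, I would take Schur/Hardy-type test functions $F_\varepsilon(y)=y^{-1/2-\varepsilon}\chi_{(1,\infty)}(y)$ or truncations $y^{-1/2}\chi_{(1,N)}(y)$, and show that $\langle K_sF,F\rangle/\|F\|^2\to\pi/\cos(\pi s)$. The upper bound could also be obtained directly from Schur's test with weight $y^{-1/2}$, avoiding Fourier analysis. I would present the Mellin/Plancherel argument, since it yields both bounds at once.
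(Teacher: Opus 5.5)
Your proposal is correct and follows essentially the paper's route. The same unitary $F\mapsto e^{u/2}F(e^u)$ followed by Plancherel turns $K_s$ into multiplication by $\hat\kappa(\xi)=\int_0^\infty \frac{y^{s-1/2+i\xi}}{1+y}\,dy$, which is exactly the paper's multiplier $m_s(\xi)$. The only difference is how the supremum at $\xi=0$ is located: you use $\kappa\ge 0$ to get $|\hat\kappa(\xi)|\le\hat\kappa(0)$ directly, whereas the paper computes $m_s(\xi)=\pi/\cos(\pi(s+i\xi))$ in closed form and uses $|\cos(\pi(s+i\xi))|^2=\cos^2(\pi s)+\sinh^2(\pi\xi)$, so your version is slightly shorter and avoids the complex-argument reflection formula.
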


\begin{proof} We start our discussion on the dense subspace $C_c(0,\infty)$. Consider the Mellin transform
\[
(\mathcal{M}F)(\xi)=\frac1{\sqrt{2\pi}}
\int_0^\infty F(x)x^{-1/2-i\xi}\,dx.
\]
This operator acts as a unitary map from $L^2(0,\infty;dx)$ to $L^2(\mathbb R;d\xi)$. To establish this property, introduce the mapping
\[
V:L^2(0,\infty;dx)\longrightarrow L^2(\mathbb R;du),
\qquad
(VF)(u)=e^{u/2}F(e^u).
\]
Applying the change of variables $x=e^u$, we compute
\begin{equation*}
\|VF\|_{L^2(\mathbb R)}^2=\int_{-\infty}^{\infty}
e^u|F(e^u)|^2\,du =\int_0^\infty |F(x)|^2\,dx=\|F\|_{L^2(0,\infty)}^2.
\end{equation*}
The equality above verifies that $V$ is an isometry. For surjectivity, take an arbitrary $G\in L^2(\mathbb R)$ and set
\[
F(x)=x^{-1/2}G(\log x), \qquad x>0.
\]
Direct substitution yields
\begin{equation*}
\|F\|_{L^2(0,\infty)}^2=\int_0^\infty x^{-1}|G(\log x)|^2\,dx =\int_{-\infty}^{\infty}|G(u)|^2\,du=\|G\|_{L^2(\mathbb R)}^2
\end{equation*}
and
\[
(VF)(u)=e^{u/2}F(e^u)=G(u),
\]
which shows that $V$ is surjective. Hence $V$ is unitary.

 Using again the substitution $x=e^u$, we obtain
\begin{align*}
(\mathcal MF)(\xi)&=\frac{1}{\sqrt{2\pi}}\int_0^\infty F(x)x^{-1/2-i\xi}\,dx \\
&=\frac{1}{\sqrt{2\pi}}\int_{-\infty}^{\infty}e^{u/2}F(e^u)e^{-i\xi u}\,du=(\mathcal F VF)(\xi),
\end{align*}
where
\[
(\mathcal FG)(\xi)=\frac{1}{\sqrt{2\pi}}\int_{-\infty}^{\infty}G(u)e^{-i\xi u}\,du
\]
denotes the Fourier transform. By the Fourier--Plancherel theorem,
$\mathcal F$ is unitary on $L^2(\mathbb R)$. Consequently,
$\mathcal M=\mathcal F V$ is unitary from $L^2(0,\infty;dx)$ onto $L^2(\mathbb R;d\xi)$.

After the substitution $y=xt$ in \eqref{eq:Ks}, one has
\[
 K_sF(x)=\int_0^\infty\frac{t^s}{1+t}F(xt)\,dt.
\]
To justify the application of Fubini's theorem, it suffices to verify the absolute integrability of the integrand. For $F\in C_c(0,\infty)$, we have 
\begin{align*}
\int_0^\infty\int_0^\infty \left|\frac{t^s}{1+t}F(xt)x^{-1/2-i\xi}
\right|\,dx\,dt&=\int_0^\infty \frac{t^s}{1+t}\left(\int_0^\infty |F(xt)|x^{-1/2}\,dx\right)dt\\
& \overset{u=xt}{=} \left(\int_0^\infty \frac{t^{s-1/2}}{1+t}\,dt\right)
\left(\int_0^\infty |F(u)|u^{-1/2}\,du\right).
\end{align*}
Since $F\in C_c(0,\infty)$, the second factor is finite. By splitting the integral at $t=1$, we obtain 
\begin{equation*}
\int_0^\infty \frac{t^{s-1/2}}{1+t}\,dt=\int_0^1 \frac{t^{s-1/2}}{1+t}\,dt+\int_1^\infty \frac{t^{s-1/2}}{1+t}\,dt.
\end{equation*}
As $t\to0^+$ and $t\to\infty$, we have
\begin{equation*}
\frac{t^{s-1/2}}{1+t}\asymp t^{s-1/2}\qquad\text{and}\qquad\frac{t^{s-1/2}}{1+t}\asymp t^{s-3/2},
\end{equation*}
respectively. Hence
\begin{equation*}
\int_0^1 \frac{t^{s-1/2}}{1+t}\,dt<\infty\quad\Longleftrightarrow\quad
s> -\frac{1}{2},
\end{equation*}
and
\begin{equation*}
\int_1^\infty \frac{t^{s-1/2}}{1+t}\,dt<\infty\quad\Longleftrightarrow\quad
s<\frac{1}{2}.
\end{equation*}
So the integrand is absolutely integrable on $(0,\infty)\times(0,\infty)$. Consequently, applying $\Mellin$,  Fubini's theorem and then putting $u=xt$ in the inner integral gives
\begin{align}\label{eq:mellin}
 (\Mellin K_sF)(\xi)  &=\frac1{\sqrt{2\pi}}   \int_0^\infty\frac{t^s}{1+t}
   \left(\int_0^\infty F(xt)x^{-1/2-i\xi}\,dx\right)dt\nonumber \\
 &=\left(\int_0^\infty   \frac{t^{s-1/2+i\xi}}{1+t}\,dt\right)(\Mellin F)(\xi).
\end{align}
Thus $K_s$ becomes multiplication under $\Mellin$ by
\begin{align*}
 m_s(\xi) &=\int_0^\infty\frac{t^{s-1/2+i\xi}}{1+t}\,dt =B\left(s+\frac12+i\xi,\frac12-s-i\xi\right)\\
 &=\Gamma\left(s+\frac12+i\xi\right)   \Gamma\left(\frac12-s-i\xi\right) =\frac{\pi}{\cos(\pi(s+i\xi))}.
\end{align*}
The assumption $-1/2<s<1/2$ guarantees that the beta integral converges.
Since $\cos(a+ib)=\cos a\cosh b-i\sin a\sinh b$ and $\cosh^{2}b=1+\sinh^{2} b$, we can obtain 
\[
 |\cos(\pi(s+i\xi))|^2 =\cos^2(\pi s)+\sinh^2(\pi\xi).
\]
Since $\sinh^2(\pi\xi)\ge 0$ with equality if and only if $\xi=0$, it follows that
$|m_s(\xi)| \le \frac{\pi}{\cos(\pi s)}.$ Moreover, since $\cos(\pi s)>0$ when $-\frac{1}{2}<s<\frac{1}{2}$, we have 
\[ 
\operatorname*{ess\,sup}_{\xi\in\mathbb R}|m_s(\xi)|=|m_s(0)|=\frac{\pi}{\cos(\pi s)}.
\]
Let $M_{m_s}$ denote the multiplication operator $M_{m_s}g(\xi)=m_s(\xi)g(\xi)$ on $L^2(\mathbb R)$. Then we have
\[
\|M_{m_s}\|_{L^2(\mathbb R)\to L^2(\mathbb R)}=\operatorname*{ess\,sup}_{\xi\in\mathbb R}|m_s(\xi)|.
\]
Combining $\mathcal M$ is unitary and \eqref{eq:mellin}, we have $\mathcal M K_s F=M_{m_s}(\mathcal M F)$ for all $F\in C_c(0,\infty)$ and consequently
\[
\|K_sF\|_{L^2(0,\infty)}=\|M_{m_s}(\mathcal MF)\|_{L^2(\mathbb R)}.
\]
Hence
\[
\sup_{\substack{F\in C_c(0,\infty)\\F\neq0}}\frac{\|K_sF\|_{L^2(0,\infty)}}
{\|F\|_{L^2(0,\infty)}}=\|M_{m_s}\|_{L^2(\mathbb R)\to L^2(\mathbb R)}=\frac{\pi}{\cos(\pi s)}.
\]
Thus \eqref{eq:Ks-norm} holds on $C_c(0,\infty)$. Since $C_c(0,\infty)$ is dense in $L^2(0,\infty)$,  $K_s$ extends
uniquely to a bounded operator on $L^2(0,\infty)$ with the same norm. Therefore,
\[
\|K_s\|_{L^2(0,\infty)\to L^2(0,\infty)}=\frac{\pi}{\cos(\pi s)}.
\]
This completes the proof.
\end{proof}

Since $ \frac1{(1-tz)^2}=\sum_{n=0}^\infty(n+1)t^nz^n,$   for a polynomial $f(z)=\sum_ka_kz^k$, direct integration gives
\begin{equation}\label{eq:HD-coeff}
 \HD f(z)=\sum_{n=0}^\infty(n+1) \left(\sum_{k=0}^\infty\frac{a_k}{n+k+1}\right)z^n.
\end{equation}
It follows at once that \eqref{eq:factor-intro} holds on polynomials.

\begin{lemma}\label{lem:matrix}
Assume that $\mathcal{H}_2:A^2_\alpha \to A^2_\beta$ is bounded. Then relative to the orthonormal bases \eqref{eq:orthonormal-basis}, the coefficient matrix of the  operator $\HD: A^2_\alpha \to A^2_\beta $ is
\begin{equation}\label{eq:Bmatrix}
 B^{\alpha,\beta}=(b_{nk}^{\alpha,\beta})_{n,k\geq0}, \qquad
 b_{nk}^{\alpha,\beta} =\frac{n+1}{n+k+1} \sqrt{\frac{w_n(\beta)}{w_k(\alpha)}}.
\end{equation}
For $n,k\to\infty$ with $n/k$ in a fixed compact subset of $(0,\infty)$,
\begin{equation}\label{eq:B-asymptotic-general}
 b_{nk}^{\alpha,\beta}  =\sqrt{\frac{\Gamma(\beta+2)}{\Gamma(\alpha+2)}}
 \frac{n^{(1-\beta)/2}k^{(\alpha+1)/2}}{n+k} \bigl(1+o(1)\bigr).
\end{equation}
\end{lemma}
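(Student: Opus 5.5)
The plan is to compute the matrix entries directly as inner products, and then read off the asymptotics from Lemma~\ref{lem:monomial}. Since $\HD$ is assumed bounded, it is determined by its action on the basis vectors $e_k^{(\alpha)}$, which are polynomials. Formula \eqref{eq:HD-coeff} therefore applies to each $e_k^{(\alpha)}$. Taking $f(z)=z^k/\sqrt{w_k(\alpha)}$, so that $a_j=\delta_{jk}/\sqrt{w_k(\alpha)}$, \eqref{eq:HD-coeff} gives
\[
\HD e_k^{(\alpha)}(z)=\frac{1}{\sqrt{w_k(\alpha)}}\sum_{n=0}^\infty\frac{n+1}{n+k+1}\,z^n .
\]
The monomials are orthogonal in $A^2_\beta$ and $z^n=\sqrt{w_n(\beta)}\,e_n^{(\beta)}$. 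Hence
\[
b_{nk}^{\alpha,\beta}=\langle \HD e_k^{(\alpha)},e_n^{(\beta)}\rangle_{A^2_\beta}=\frac{n+1}{n+k+1}\sqrt{\frac{w_n(\beta)}{w_k(\alpha)}}.
\]

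One point needs justification. The series for $\HD e_k^{(\alpha)}$ must be the $A^2_\beta$-expansion of a function in $A^2_\beta$, so that the inner products can be taken termwise. This follows from boundedness: $\HD e_k^{(\alpha)}\in A^2_\beta$. Its Taylor coefficients are the ones displayed, and the Taylor coefficients of an $A^2_\beta$ function coincide with its Fourier coefficients with respect to the orthogonal monomials. This is the only place where the hypothesis is used.

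For the asymptotics, let $n,k\to\infty$ with $n/k\in[c,C]\subset(0,\infty)$. Then:
\begin{itemize}
\item $\frac{n+1}{n+k+1}=\frac{n}{n+k}(1+O(1/n))$, because $n+k\asymp n$.
\item By \eqref{eq:wn-asymp}, $w_n(\beta)=\Gamma(\beta+2)n^{-\beta-1}(1+O(n^{-1}))$ and $w_k(\alpha)=\Gamma(\alpha+2)k^{-\alpha-1}(1+O(k^{-1}))$.
\end{itemize}
Substituting these,
\[
b_{nk}^{\alpha,\beta}=\sqrt{\frac{\Gamma(\beta+2)}{\Gamma(\alpha+2)}}\,\frac{n\cdot n^{-(\beta+1)/2}k^{(\alpha+1)/2}}{n+k}\,(1+o(1)),
\]
and $n\cdot n^{-(\beta+1)/2}=n^{(1-\beta)/2}$, which is \eqref{eq:B-asymptotic-general}.

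The error terms are uniform because $n$ and $k$ tend to infinity together, comparably, on the compact ratio set. No step is a real obstacle. The only subtle point is the justification above that the matrix entries are well defined under boundedness; that is routine.
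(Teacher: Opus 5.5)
Your proposal is correct and follows essentially the same route as the paper. You apply \eqref{eq:HD-coeff} to $e_k^{(\alpha)}$, read off the coefficient of $e_n^{(\beta)}$ via $z^n=\sqrt{w_n(\beta)}\,e_n^{(\beta)}$, and then substitute \eqref{eq:wn-asymp} together with $\frac{n+1}{n+k+1}=\frac{n}{n+k}(1+o(1))$. Your remark that Taylor coefficients coincide with Fourier coefficients in $A^2_\beta$ simply makes explicit a point the paper leaves implicit.
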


\begin{proof}
Fix $k\geq0$.  Formula \eqref{eq:HD-coeff} applied to the normalized monomial $e_k^{(\alpha)}=z^k/\sqrt{w_k(\alpha)}$ gives
\[
 \HD e_k^{(\alpha)}(z) =\sum_{n=0}^\infty   \frac{n+1}{n+k+1}   \frac{z^n}{\sqrt{w_k(\alpha)}}.
\]
Because $z^n=\sqrt{w_n(\beta)}e_n^{(\beta)}$, the coefficient of $e_n^{(\beta)}$ in this expansion is exactly
\[
 \frac{n+1}{n+k+1} \sqrt{\frac{w_n(\beta)}{w_k(\alpha)}}.
\]
This proves \eqref{eq:Bmatrix}. Next, by \eqref{eq:wn-asymp}, as $n,k\to\infty$,
\[
\sqrt{w_n(\beta)}=\sqrt{\Gamma(\beta+2)}\,n^{-(\beta+1)/2}\bigl(1+\mathcal{O}(n^{-1})\bigr),
\]
and it follows from $\frac{1}{1+x}=1-x+\mathcal{O}(x^2) $ holding as $x\to 0$ that 
\[
\frac{1}{\sqrt{w_k(\alpha)}}=\frac{k^{(\alpha+1)/2}}{\sqrt{\Gamma(\alpha+2)}}\bigl(1+\mathcal{O}(k^{-1})\bigr).
\]
Thus, because $\sqrt{1+x}=1+\mathcal{O}(x)$ holds as $x\to 0$, we obtain
\begin{equation*}
\frac{n+1}{n+k+1}\frac{\sqrt{w_n(\beta)}}{\sqrt{w_k(\alpha)}}=\sqrt{\frac{\Gamma(\beta+2)}{\Gamma(\alpha+2)}}
\frac{n+1}{n+k+1}\frac{k^{(\alpha+1)/2}}{n^{(\beta+1)/2}}\bigl(1+\mathcal{O}(n^{-1})+\mathcal{O}(k^{-1})\bigr).
\end{equation*}
If $n/k$ is restricted to a compact subset of $(0,\infty)$, then $n\asymp k$. Consequently,
\[
\mathcal{O}(n^{-1})+\mathcal{O}(k^{-1})=o(1)
\]
uniformly in this range. Since 
\begin{equation*}
\frac{n+1}{n+k+1}=\frac{n}{n+k}(1+o(1)),
\end{equation*}
we obtain \eqref{eq:B-asymptotic-general}.
\end{proof}

We next give a decomposition of $\mathcal{H}_{2}$. Let $g(z)=\sum_{n=0}^\infty c_nz^n$. Define
\begin{equation}\label{eq:Ddef}
 Dg(z)=(zg(z))'=\sum_{n=0}^\infty(n+1)c_nz^n .
\end{equation}
Then $\HD=D\HH$.  Unlike a general differentiation operator, $D$ is diagonal in the monomial bases, so its norm and compactness are immediate.

\begin{lemma}\label{lem:D}
Let $\alpha,\beta>-1$.  The operator $D:A^2_\alpha\to A^2_\beta$ is bounded if and only if $\beta\geq\alpha+2$.  In that case, 
\begin{equation*} 
 \|D\|_{A^2_\alpha\to A^2_\beta}  = \sup_{n\geq0}(n+1) \left[ \frac{\Gamma(\beta+2)\Gamma(n+\alpha+2)}
 {\Gamma(\alpha+2)\Gamma(n+\beta+2)} \right]^{1/2}. 
\end{equation*} 
Moreover, $D$ is compact if and only if $\beta>\alpha+2$. In particular,  
\begin{equation*}\label{eq:Dcritical}
 \|D\|_{A^2_\alpha\to A^2_{\alpha+2}} =\sqrt{(\alpha+2)(\alpha+3)}.
\end{equation*}
\end{lemma}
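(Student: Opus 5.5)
Since $D$ acts diagonally on monomials, $D z^n=(n+1)z^n$, I will compute everything in the orthonormal bases \eqref{eq:orthonormal-basis}. Relative to $\{e_n^{(\alpha)}\}$ and $\{e_n^{(\beta)}\}$, $D$ becomes the diagonal matrix with entries
\[
 d_n=(n+1)\sqrt{\frac{w_n(\beta)}{w_n(\alpha)}}
 =(n+1)\left[\frac{\Gamma(\beta+2)\Gamma(n+\alpha+2)}{\Gamma(\alpha+2)\Gamma(n+\beta+2)}\right]^{1/2},
\]
using \eqref{eq:wnexact}. A diagonal operator between Hilbert spaces is bounded if and only if $\sup_n d_n<\infty$, and its norm then equals that supremum. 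It is compact if and only if $d_n\to0$, because then it is a norm limit of finite-rank truncations; conversely, $d_n\not\to 0$ along a subsequence while $e_n^{(\alpha)}\rightharpoonup0$ rules out compactness. This already yields the stated norm formula, so the remaining work is the asymptotics of $d_n$ and the critical supremum.

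By \eqref{eq:wn-asymp}, $d_n\asymp n\cdot n^{-(\beta+1)/2}n^{(\alpha+1)/2}=n^{1-(\beta-\alpha)/2}$. Hence $d_n$ is bounded if and only if $\beta\ge\alpha+2$, and tends to $0$ if and only if $\beta>\alpha+2$. This settles boundedness and compactness. For $\beta=\alpha+2$ one has $d_n\to\sqrt{\Gamma(\alpha+4)/\Gamma(\alpha+2)}=\sqrt{(\alpha+2)(\alpha+3)}$, so this is the norm, and $D$ is not compact there.

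The critical norm is the only delicate point: I must show that $d_n$ increases to its limit, so that the supremum equals the limit. With $\beta=\alpha+2$ and $a=\alpha+2>1$, the recursion $\Gamma(n+\alpha+4)=(n+a+1)(n+a)\Gamma(n+a)$ gives
\[
 d_n^2=\frac{(\alpha+2)(\alpha+3)(n+1)^2}{(n+\alpha+2)(n+\alpha+3)}.
\]
Since $\alpha+2>1$ and $\alpha+3>2\geq 1$, we have $(n+1)^2<(n+\alpha+2)(n+\alpha+3)$, and the ratio $\frac{n+1}{n+\alpha+2}\cdot\frac{n+1}{n+\alpha+3}$ is increasing in $n$ with limit $1$. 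Therefore $\sup_n d_n=\lim_n d_n=\sqrt{(\alpha+2)(\alpha+3)}$, which completes the proof.
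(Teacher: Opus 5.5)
Your proof is correct and follows essentially the same route as the paper. You diagonalize $D$ in the monomial orthonormal bases, read off boundedness, norm and compactness from the multipliers $d_n$ and their Gamma-quotient asymptotics, and in the critical case compute $d_n^2=(\alpha+2)(\alpha+3)\frac{(n+1)^2}{(n+\alpha+2)(n+\alpha+3)}$ explicitly and compare it with its limit. The only difference is that you additionally show $d_n$ is increasing, which is more than needed; the paper uses only the pointwise bound $(n+1)^2<(n+\alpha+2)(n+\alpha+3)$, and you establish that too.
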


\begin{proof}
For the normalized monomial $e_n^{(\alpha)}$, we have
\begin{equation*}
 De_n^{(\alpha)} =(n+1)\frac{z^n}{\sqrt{w_n(\alpha)}} =(n+1)\sqrt{\frac{w_n(\beta)}{w_n(\alpha)}} e_n^{(\beta)}.
\end{equation*}
Thus the diagonal coefficient of $D$ between the orthonormal bases $\left \{ e_{n}^{(\alpha)} \right \}_{n\ge 0} $ and $\left \{ e_{n}^{(\beta)} \right \}_{n\ge 0} $ is
\begin{equation*}
 d_n=(n+1)\sqrt{\frac{w_n(\beta)}{w_n(\alpha)}}.
\end{equation*}
If $\left \{ d_n\right \}_{n\ge0} $ is bounded, then the diagonal operator $D:A_{\alpha}^{2}\to A_{\beta}^{2}$ is bounded and
\begin{equation*}
\|D\|_{A_{\alpha}^{2}\to A_{\beta}^{2}}\leq \sup_{n\geq0} d_n .
\end{equation*}
Conversely, assume that $D$ is bounded, then $\left \| De_{n}^{\alpha} \right \|_{A_{\beta}^{2}}= d_n $ gives 
\begin{equation*}
\sup_{n\geq0} d_n \leq \|D\|_{A_{\alpha}^{2}\to A_{\beta}^{2}},
\end{equation*}
so $D$ is bounded if and only if $ \left \{ d_n\right \}_{n\ge0} $ is bounded, and in that case $\left \| D \right \|_{A_{\alpha}^{2}\to A_{\beta}^{2}}=\sup_{n\geq0} d_n $. 

If $d_n\to0$, $\forall g(z)=\sum_{n=0}^{\infty}c_n e_n^{(\alpha)}(z)\in A_{\alpha}^2$ define
\begin{equation*}
D_N g(z)=\sum_{n=0}^{N}d_n c_n e_n^{(\beta)}(z).
\end{equation*}
Each $D_N$ has finite rank, and the diagonal norm formula gives
\begin{equation*}
\|D-D_N\|_{A_{\alpha}^{2}\to A_{\beta}^{2}}=\sup_{n>N} d_n  \rightarrow0.
\end{equation*}
Hence $D$ is compact.

Conversely, if $D$ is compact, then $e_n^{(\alpha)}\rightarrow 0$ weakly since $\{e_n^{(\alpha)}\}_{n\geq0}$ is orthonormal in $A_{\alpha}^2$. Therefore,
\begin{equation*}
 d_n =\|D e_n^{(\alpha)}\|_{A_{\beta}^2} \longrightarrow0.
\end{equation*}
Thus $D$ is compact if and only if $d_n\to0$.

 Using the  estimate
\begin{equation}\label{eq:standard gamma-quotient}
 \frac{\Gamma(n+a)}{\Gamma(n+b)} =n^{a-b}\bigl(1+O(n^{-1})\bigr),\qquad n\to \infty,
\end{equation}
we obtain
\begin{equation*}
 d_n =\sqrt{\frac{\Gamma(\beta+2)}{\Gamma(\alpha+2)}} n^{(\alpha-\beta+2)/2}(1+O(n^{-1})).
\end{equation*}
Since $\frac{\alpha-\beta+2}{2}\leq 0$ holds if and only if $\beta\geq \alpha+2$,
and $
\frac{\alpha-\beta+2}{2}<0$ holds if and only if $\beta>\alpha+2,$
the above characterization for boundedness and compactness yields that  $D$ is bounded precisely when $\beta\geq\alpha+2$, and compact precisely when $\beta>\alpha+2$.

Suppose that $\beta=\alpha+2$. Then
\begin{equation*} 
d_n^2=(\alpha+2)(\alpha+3)\frac{(n+1)^2}{(n+\alpha+2)(n+\alpha+3)}.
\end{equation*}
Since
\begin{equation*}
\frac{(n+1)^2}      {(n+\alpha+2)(n+\alpha+3)}\rightarrow 1\qquad (n\to\infty),
\end{equation*}
we have
\begin{equation*}
d_n \rightarrow \sqrt{(\alpha+2)(\alpha+3)}.
\end{equation*}
For any $  n\geq0$, we have
\begin{equation*}
(n+\alpha+2)(n+\alpha+3)-(n+1)^2=(2\alpha+3)n+\alpha^2+5\alpha+5.
\end{equation*}
Since $\alpha>-1$, we have $2\alpha+3>0$. In addition, as $\alpha^2+5\alpha+5$ is strictly increasing on $(-1,\infty)$, we obtain $\alpha^2+5\alpha+5>0$ and therefore $(n+\alpha+2)(n+\alpha+3)>(n+1)^2$. Consequently, $\forall n\geq0$
\begin{equation*}
d_n< \sqrt{(\alpha+2)(\alpha+3)}.
\end{equation*}
Together with the above limit, this yields
\begin{equation*}
\sup_{n\geq0} d_n=\sqrt{(\alpha+2)(\alpha+3)}.
\end{equation*}
Since $\|D\|_{A_{\alpha}^{2}\to A_{\alpha+2}^{2}}=\sup_{n\geq0}d_n$, it follows that
\begin{equation*}
\|D\|_{A_{\alpha}^{2}\to A_{\alpha+2}^{2}}=\sqrt{(\alpha+2)(\alpha+3)}.
\end{equation*}
This completes the proof.
\end{proof}

\section{Boundedness, compactness, norm, and essential norm}
\label{sec:norm-properties}

\begin{theorem}\label{thm:bdd} 
Let $\alpha,\beta>-1$.   Then the following statements hold. 
\begin{enumerate}
\item The operator $\HD:A^2_\alpha\to A^2_\beta$ is bounded if and only if
\begin{equation*} 
 -1<\alpha<0\quad\text{and}\quad \beta\geq\alpha+2.
\end{equation*}  
\item The operator $\HD:A^2_\alpha\to A^2_\beta$ is   compact if and only if
\begin{equation*} 
 -1<\alpha<0\quad\text{and}\quad \beta>\alpha+2.
\end{equation*}
\end{enumerate}
\end{theorem}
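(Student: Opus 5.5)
The plan is to use the factorization $\HD=D\HH$ together with Lemma~\ref{lemma:Hnorm} and Lemma~\ref{lem:D} for sufficiency, and test functions or matrix entries for necessity.

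\emph{Sufficiency.} Suppose $-1<\alpha<0$ and $\beta\ge\alpha+2$. By Lemma~\ref{lemma:Hnorm}, $\HH$ is bounded on $A^2_\alpha$. By Lemma~\ref{lem:D}, $D:A^2_\alpha\to A^2_\beta$ is bounded. Hence $\HD=D\HH$ is bounded on polynomials and extends boundedly. If moreover $\beta>\alpha+2$, then $D$ is compact by Lemma~\ref{lem:D}. Since $\HH$ is bounded, the composition $D\HH$ is compact.

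\emph{Necessity of $\beta\ge\alpha+2$ and of $\beta>\alpha+2$ for compactness.} Assume $\HD:A^2_\alpha\to A^2_\beta$ is bounded. Use the matrix of Lemma~\ref{lem:matrix} and take $n=k\to\infty$. By \eqref{eq:B-asymptotic-general}, the diagonal entry satisfies
\[
b^{\alpha,\beta}_{nn}\asymp n^{(\alpha-\beta+2)/2-1}\cdot n\cdot\tfrac12\asymp n^{(\alpha+1-\beta+1)/2-0}\cdot\tfrac{1}{2}.
\]
Entries alone only give boundedness of $n^{(\alpha-\beta)/2}$, which is too weak. So I would instead test on blocks: take $f_N=N^{-1/2}\sum_{k=N}^{2N}e^{(\alpha)}_k$, a unit vector. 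Then for $N\le n\le 2N$ the $n$-th coordinate of $\HD f_N$ is $\asymp N^{-1/2}\cdot N\cdot N^{(\alpha-\beta)/2}$. Summing squares over $N$ indices gives $\|\HD f_N\|^2\gtrsim N^{2+\alpha-\beta}$. This forces $\beta\ge\alpha+2$. For compactness, $f_N\to0$ weakly, since the blocks for $N=2^j$ are orthonormal. Then compactness forces $\|\HD f_N\|\to0$, which excludes $\beta=\alpha+2$, because there the lower bound is a positive constant. All entries are positive, so there is no cancellation to worry about.

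\emph{Necessity of $-1<\alpha<0$.} This is the main obstacle, since $\beta$ may be huge and the output weight is weak. I would test on $f_\epsilon(t)=(1-t)^{-(\alpha+2)/2+\epsilon}$, i.e. $f(z)=(1-z)^{-\gamma}$ with $\gamma=(\alpha+2)/2-\epsilon$, which lies in $A^2_\alpha$ with norm$^2\asymp1/\epsilon$. If $\alpha\ge0$, then $\gamma\ge1-\epsilon$, and for suitable $\gamma\ge1$ still in $A^2_\alpha$ (any $\gamma<(\alpha+2)/2$ with $\gamma\ge1$, possible when $\alpha>0$), the integral $\int_0^1 f(t)\,dt$ diverges. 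Then $\HD f(0)$ is undefined or infinite, so $\HD$ is not even defined on $A^2_\alpha$. For the endpoint $\alpha=0$, use the family $\gamma\uparrow1$. Then $\HD f_\gamma(0)=\int_0^1(1-t)^{-\gamma}dt=1/(1-\gamma)$, whereas $\|f_\gamma\|_{A^2_0}^2\asymp 1/(2-2\gamma)$. Since point evaluation at $0$ is bounded on $A^2_\beta$, boundedness would force
\[
\frac{1}{1-\gamma}\lesssim(1-\gamma)^{-1/2},
\]
which is a contradiction. The same evaluation-at-$0$ argument, with the functional $f\mapsto\int_0^1 f(t)\,dt$, handles all $\alpha\ge0$ uniformly. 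Equivalently, via coefficients, boundedness requires $(1/(k+1))_k$, normalized by $\sqrt{w_k(\alpha)}$, to be square-summable: $\sum_k w_k(\alpha)^{-1}(k+1)^{-2}\asymp\sum_k k^{\alpha-1}<\infty$, which holds iff $\alpha<0$. This is the cleanest route, and it is the one I would write: the first row of $B^{\alpha,\beta}$ must lie in $\ell^2$.
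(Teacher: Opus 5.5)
Your proposal is correct and follows the paper's proof essentially step for step. You use the factorization $\HD=D\HH$ for boundedness and compactness. You use the constant-term functional $f\mapsto\int_0^1 f(t)\,dt$ (the first row of $B^{\alpha,\beta}$ lying in $\ell^2$) to force $\alpha<0$. You use the normalized block vectors supported on $[N,2N]$ both to force $\beta\ge\alpha+2$ and to rule out compactness at $\beta=\alpha+2$.

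The remaining issues are cosmetic:
\begin{itemize}
\item The displayed diagonal-entry estimate, which you then abandon, has the wrong exponent; it should be $b^{\alpha,\beta}_{nn}\asymp n^{(\alpha-\beta)/2}$.
\item The blocks for $N=2^j$ are not exactly orthonormal, since consecutive ones share the index $2N$. The weak nullity of the test vectors is more cleanly justified by the tail Cauchy--Schwarz estimate used in the paper.
\end{itemize}
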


\begin{proof} $(1)$.  Suppose that $\HD:A^2_\alpha\to A^2_\beta$ is bounded. We first note that the evaluation functional at the origin $E_0(g)=g(0)$ is bounded on $A_\beta^2$ and satisfies $\|E_0\|=1$. For a polynomial $f(z)=\sum_{k=0}^Na_kz^k$, the constant term in \eqref{eq:HD-coeff} gives
\[
\mathcal H_2(f)(0)=\int_0^1 f(t)\,dt=\sum_{k=0}^{N}\frac{a_k}{k+1},
\]
the infinite sum is well defined since $a_k=0$ for $k>N$.  Put $x_k=a_k\sqrt{w_k(\alpha)}$.  Then
\begin{equation*}
 (\mathcal H_2f)(0)=\sum_{k=0}^N   \frac{x_k}{(k+1)\sqrt{w_k(\alpha)}}, \qquad \|f\|_{A^2_\alpha}^2=\sum_{k=0}^N|x_k|^2.
\end{equation*}
By the Riesz representation theorem, the functional $f\mapsto \mathcal H_2(f)(0)$ is bounded on $A_\alpha^2$ if and only if
\[
\sum_{k=0}^{\infty}\frac{1}{(k+1)^2w_k(\alpha)}<\infty,
\]
and its squared norm is given by
$
\sum_{k=0}^{\infty}\frac{1}{(k+1)^2w_k(\alpha)}.
$
By \eqref{eq:wn-asymp}, we obtain that for every  $ k\ge1$,
\[
\sum_{k=0}^{\infty} \frac{1}{(k+1)^2w_k(\alpha)}\asymp \sum_{k=0}^{\infty} k^{\alpha-1}<\infty\quad\Longleftrightarrow\quad \alpha<0.
\]

 Since the evaluation functional at the origin satisfies $\|E_0\|=1$, for every polynomial $f$ we have
\begin{equation*}
|\mathcal H_2(f)(0)|=|E_0(\mathcal H_2 f)|\leq
\|\mathcal H_2f\|_{A_\beta^2}\leq \|\mathcal H_2\|_{A_\alpha^2\to A_\beta^2} \|f\|_{A_\alpha^2}.
\end{equation*}
Thus the functional $f\mapsto\mathcal H_2(f)(0)$ is bounded on the polynomials with respect to the $A_\alpha^2$ norm. Since the polynomials
are dense in $A_\alpha^2$, it extends uniquely to a bounded linear functional on $A_\alpha^2$. By the preceding characterization, this is
possible only when $-1<\alpha<0$.

Let
\begin{equation*}
U_\alpha:A_\alpha^2\to\ell^2,\qquad U_\alpha\left(\sum_{k\ge 0}a_kz^k  \right)=(a_{k}\sqrt{w_k(\alpha)})_{k\ge 0}.
\end{equation*}
A simple proof shows that $U_{\alpha}$ is unitary. Similarly, one may define a unitary mapping $U_{\beta}:A_\beta^2 \to \ell^2$. Under these unitary identifications,
$\mathcal H_2$ is represented by the  matrix
$B^{\alpha,\beta}$ in \eqref{eq:Bmatrix}, that is, $B^{\alpha,\beta}=U_\beta\mathcal H_2U_\alpha^{-1}$. Therefore
\begin{equation*}
\|\mathcal H_2\|_{A_{\alpha}^{2}\to A_{\beta}^{2}}=\|B^{\alpha,\beta}\|_{\ell^2 \to \ell^2}.
\end{equation*}

For each integer $N\geq1$, define
\[
x_k^{(N)}=
\begin{cases}
N^{-1/2}, & N\leq k\leq 2N,\\
0, & \text{otherwise}.
\end{cases}
\]
It is clear that $x^{(N)}\in \ell^2$. Suppose now that $N\leq n,k\leq2N$, then $n/k\in[1/2,2]$,which is a compact subset of $(0,\infty)$. Hence, from the
uniform form of \eqref{eq:B-asymptotic-general} there exist constants
$c_{\alpha,\beta}>0$ and $N_0$ such that for all $N\geq N_0$,
\[
 b_{nk}^{\alpha,\beta}\geq c_{\alpha,\beta} N^{(\alpha-\beta)/2},\qquad N\leq k\leq2N.
\]
All entries and all nonzero coordinates of $x^{(N)}$ are positive, so for $N\leq n\leq2N$,
\begin{align*}
 (B^{\alpha,\beta}x^{(N)})_n  =\sum_{k=N}^{2N}b_{nk}^{\alpha,\beta}N^{-1/2} \geq c_{\alpha,\beta}(N+1) N^{(\alpha-\beta-1)/2}\geq c_{\alpha,\beta}N^{(\alpha-\beta+1)/2}.
\end{align*}
Therefore, 
\begin{equation}\label{eq:B-N}
 \|B^{\alpha,\beta}x^{(N)}\|_{\ell^2}
 \ge \left(\sum_{n=N}^{2N}\left| (B^{\alpha,\beta}x^{(N)})_n\right|^2\right)^{\frac{1}{2}}\ge  c_{\alpha,\beta}N^{1+(\alpha-\beta)/2}.
\end{equation}
If $B^{\alpha,\beta}$ is bounded, the exponent of $N$ must be nonpositive, that is, $1+\frac{\alpha-\beta}{2}\leq0,$ which is equivalent to $\beta\geq\alpha+2$. Consequently, a necessary condition for the boundedness of $\mathcal H_2:A_\alpha^2\to A_\beta^2$ is $-1<\alpha<0$ and $\beta\geq\alpha+2$.

Conversely, assume that $-1<\alpha<0$ and $\beta\geq\alpha+2$.  By Lemma \ref{lemma:Hnorm},
$ \HH:A^2_\alpha\longrightarrow A^2_\alpha$ is bounded.  By Lemma \ref{lem:D}, $ D:A^2_\alpha\longrightarrow A^2_\beta$ is bounded.  Therefore the operator $D\HH$ is bounded and
\[
 \|D\HH f\|_{A^2_\beta} \leq\|D\|_{A_{\alpha}^2 \to A_{\beta}^2}\,\|\HH\|_{A_{\alpha}^2 \to A_{\alpha}^2}\,\|f\|_{A^2_\alpha}.
\]
Therefore, $D\mathcal{H}$ is bounded. By \eqref{eq:Ddef},  we have $\mathcal{H}_2=D\mathcal{H}$ on the set of polynomials. Since the polynomials are dense in $A_{\alpha}^2$ and $D\mathcal{H}$ is bounded on polynomials, it follows that $\mathcal{H}_2=D\mathcal{H}$ on the whole space $A_{\alpha}^2$. Hence, $\mathcal{H}_2$ is bounded.

$(2)$. Suppose $-1<\alpha<0$  and $\beta>\alpha+2$. Then by Lemma \ref{lem:D}, we obtain that
$D:A^2_\alpha\to A^2_\beta$ is compact. Therefore, $\HD=D\HH$ is compact.

Conversely, assume that $\beta=\alpha+2$. Let $\left \{ y_k \right \}_{k\ge 0} \in\ell^2$. Then Cauchy--Schwarz yields
\[
 |\langle x^{(N)},y\rangle| \leq\|x^{(N)}\|_{\ell^2} \left(\sum_{k\geq N}|y_k|^2\right)^{1/2}\rightarrow0.
\]
Thus $x^{(N)}\rightharpoonup0$ in $\ell^2$.  At $\beta=\alpha+2$, it follows from \eqref{eq:B-N} that
\[
 \|B^{\alpha,\alpha+2}x^{(N)}\|_{\ell^2}\geq c_\alpha>0.
\]
This contradicts the compactness of $\mathcal{H}_2$. If $\alpha \notin (-1,0)$ or $\beta<\alpha+2$, then Theorem \ref{thm:bdd} implies that $\mathcal{H}_{2}:A_{\alpha}^2\to A_{\beta}^2$ is not bounded. Since every compact operator is bounded, it follows that $\mathcal{H}_{2}$  is not compact. This completes the proof of both conclusions.
\end{proof}

\begin{corollary}\label{cor:same-space} For every $\alpha>-1$, the differential Hilbert operator $\HD$ is unbounded on
$A^2_\alpha$.  Thus $\HD$ belongs to no Schatten ideal on $A^2_\alpha$.
\end{corollary}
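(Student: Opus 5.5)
This follows directly from Theorem~\ref{thm:bdd}(1) with $\beta=\alpha$. That theorem says $\HD:A^2_\alpha\to A^2_\beta$ is bounded only if $-1<\alpha<0$ and $\beta\geq\alpha+2$. Taking $\beta=\alpha$, the second condition becomes $\alpha\geq\alpha+2$, which never holds. So $\HD$ is unbounded on $A^2_\alpha$ for every $\alpha>-1$, and there is no need to split into the cases $\alpha\in(-1,0)$ and $\alpha\geq0$.

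If we want a self-contained witness instead of quoting the theorem, we can use the test vectors $x^{(N)}$ from its proof. For $\beta=\alpha$, the estimate \eqref{eq:B-N} gives
\[
\|B^{\alpha,\alpha}x^{(N)}\|_{\ell^2}\geq c_{\alpha,\alpha}N\to\infty,
\]
while $\|x^{(N)}\|_{\ell^2}=\sqrt{(N+1)/N}\leq\sqrt2$. Since $B^{\alpha,\alpha}=U_\alpha\HD U_\alpha^{-1}$ with $U_\alpha$ unitary, $\HD$ is unbounded on the polynomials. Note that \eqref{eq:B-N} only uses the entry asymptotics \eqref{eq:B-asymptotic-general}, which follow from \eqref{eq:wn-asymp} and the explicit formula \eqref{eq:Bmatrix} and do not require boundedness. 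This route also applies when $\alpha\geq0$, where the constant-term obstruction would already be enough.

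For the Schatten statement, recall from Section~\ref{sec:prelim} that every member of $\Sp_p(A^2_\alpha,A^2_\alpha)$, for any $0<p<\infty$, is compact and therefore bounded. An unbounded operator cannot belong to any of these ideals, so $\HD\notin\Sp_p$ for all $p$.

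There is no real obstacle here. The only point needing care is that \eqref{eq:B-N} is applied to the formal matrix of $\HD$ on polynomials, not to an operator already assumed bounded. Lemma~\ref{lem:matrix} is stated under a boundedness hypothesis, but its entry formula is valid on polynomials without it.
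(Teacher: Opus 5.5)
Your proposal is correct and matches the paper's proof: set $\beta=\alpha$ in Theorem~\ref{thm:bdd}(1), note that $\alpha\geq\alpha+2$ is impossible, and use that every Schatten-class operator is bounded. Your optional direct witness via $x^{(N)}$ and \eqref{eq:B-N} is just the mechanism already inside that theorem's proof. It is valid, and you are right that it needs only the entry formula on polynomials, not a boundedness hypothesis.
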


\begin{proof} If $\HD$ were bounded from $A^2_\alpha$ into itself,   Theorem \ref{thm:bdd} would give both $-1<\alpha<0$ and
$\alpha\geq\alpha+2$.  The second inequality is impossible.  Thus the operator on $A^2_\alpha$ is unbounded.  Every Schatten-class operator is
bounded, so $\mathcal{H}_2$ cannot belong to any Schatten class.
\end{proof}

Fix $-1<\alpha<0$, and set 
\begin{equation*} 
 s=\frac{\alpha+1}{2}\in(0,1/2).
\end{equation*}
Then it follows from \eqref{eq:B-asymptotic-general} that  
\begin{equation}\label{eq:critical-matrix-asymp}
 b_{nk}^{\alpha,\alpha+2} =\sqrt{(\alpha+2)(\alpha+3)}\frac{(k/n)^s}{n+k}(1+o(1))
\end{equation}
when $n,k$ tend to infinity at comparable rates.

\begin{lemma}\label{lem:discrete-continuous} For $-1<\alpha<0$, we have 
\begin{equation*} 
 \|B^{\alpha,\alpha+2}\|_{\ell^2 \to \ell^2}\geq \frac{\pi\sqrt{(\alpha+2)(\alpha+3)}}{\sin(\pi(\alpha+2)/2)}.
\end{equation*}
Moreover, for every compact operator $C$ on $\ell^2$,
\begin{equation*} 
 \|B^{\alpha,\alpha+2}-C\|_{\ell^2 \to \ell^2}\geq \frac{\pi\sqrt{(\alpha+2)(\alpha+3)}}{\sin(\pi(\alpha+2)/2)}.
\end{equation*}
\end{lemma}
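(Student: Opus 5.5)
We will prove both inequalities with one family of test vectors, chosen so that they converge weakly to zero and make the quadratic form of $B:=B^{\alpha,\alpha+2}$ approach the norm of the continuous operator $\sqrt{(\alpha+2)(\alpha+3)}\,K_s$ of Lemma \ref{lem:Ks-norm}, where $s=(\alpha+1)/2$. First note that $\pi/\cos(\pi s)=\pi/\cos(\pi(\alpha+1)/2)=\pi/\sin(\pi(\alpha+2)/2)$, so the target constant is $\sqrt{(\alpha+2)(\alpha+3)}\,\|K_s\|$. Recall that the maximum $|m_s(0)|$ of the Mellin multiplier is attained at $\xi=0$, which corresponds to the function $x^{-1/2}$. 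Approximate maximizers are therefore truncated powers. For $0<\varepsilon$ small and $N$ large, we take
\[
x_k=k^{-1/2-\varepsilon}\quad (N\le k),\qquad x_k=0 \ (k<N),
\]
normalized in $\ell^2$, and the same vector $y$ as the left test vector. Since $\|x\|^2\asymp N^{-2\varepsilon}/(2\varepsilon)$, normalization gives $\hat x$ with $\hat x\rightharpoonup 0$ as $N\to\infty$ for fixed $\varepsilon$, because the mass escapes to infinity.

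Next we compute $\langle B\hat x,\hat x\rangle$. All entries are positive, so we may restrict the double sum to the region $\delta\le n/k\le 1/\delta$ with $n,k\ge N$. There \eqref{eq:critical-matrix-asymp} gives $b_{nk}\ge (1-\eta)\sqrt{(\alpha+2)(\alpha+3)}\,(k/n)^s/(n+k)$ uniformly once $N$ is large. Comparing the sum with the integral, using the homogeneity of degree $-1$ of the kernel and substituting $k=nt$, the inner sum is approximately
\[
n^{-1/2-\varepsilon}\int_{\delta}^{1/\delta}\frac{t^{s-1/2-\varepsilon}}{1+t}\,dt,
\]
up to boundary effects near $k=N$ that contribute $O(1)$ relative to the $1/\varepsilon$-size total mass. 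This yields
\[
\langle B\hat x,\hat x\rangle\ \ge\ (1-\eta)\sqrt{(\alpha+2)(\alpha+3)}\int_\delta^{1/\delta}\frac{t^{s-1/2-\varepsilon}}{1+t}\,dt\;-\;o(1).
\]
We then let $N\to\infty$, then $\varepsilon\to0$, $\delta\to0$, and $\eta\to0$. The integral tends to $m_s(0)=\pi/\cos(\pi s)$, so $\|B\|\ge$ the constant.

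For the essential-norm statement, let $C$ be compact. We choose a sequence of the normalized vectors above with parameters $\varepsilon_j\to0$ and $N_j\to\infty$ chosen after $\varepsilon_j$, so that $\hat x^{(j)}\rightharpoonup0$. Then $\|C\hat x^{(j)}\|\to0$, and hence
\[
\|B-C\|\ \ge\ \langle (B-C)\hat x^{(j)},\hat x^{(j)}\rangle\ \ge\ \langle B\hat x^{(j)},\hat x^{(j)}\rangle-\|C\hat x^{(j)}\|.
\]
This tends to the constant, which gives the second inequality. The first inequality is the special case $C=0$.

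The main obstacle is the Riemann-sum comparison. Concretely, we must show that $\sum_{k\ge N}\frac{(k/n)^s}{n+k}k^{-1/2-\varepsilon}$ is at least $n^{-1/2-\varepsilon}(\int_{\delta}^{1/\delta}\cdots)(1-o(1))$ for $n\ge N/\delta$. This uses monotonicity of the summand on the relevant range, and losing only the $n\in[N,N/\delta]$ block, whose share of the mass is $O(\varepsilon\log(1/\delta))$. That share vanishes as $\varepsilon\to0$ for fixed $\delta$, which is exactly why $\varepsilon$ is sent to $0$ before $\delta$ in the order of limits above.
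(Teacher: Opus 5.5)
Your argument is correct, but it takes a different route from the paper.

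\textbf{The paper's route.} The paper transfers the continuous operator to the discrete one for arbitrary profiles. For each $F\in C_c(0,\infty)$ it forms the sampled vectors $(J_NF)_k=N^{-1/2}F(k/N)$. A Riemann-sum argument, using \eqref{eq:critical-matrix-asymp} only on a compact range of ratios $n/k$, gives $\liminf_N\|BJ_NF\|\ge\sqrt{(\alpha+2)(\alpha+3)}\,\|K_sF\|$. It then takes the supremum over $F$, which calls on the full Mellin--Plancherel computation $\|K_s\|=\pi/\cos(\pi s)$ of Lemma \ref{lem:Ks-norm}. Weak nullity of $J_NF$ handles the compact perturbation.

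\textbf{Your route.} You build explicit near-extremals, the truncated powers $k^{-1/2-\varepsilon}$; these are the discrete counterpart of the $\xi=0$ peak of the Mellin multiplier. You then bound the quadratic form $\langle B\hat x,\hat x\rangle$ directly. This needs only the scalar beta integral $\int_0^\infty t^{s-1/2}(1+t)^{-1}\,dt=\pi/\cos(\pi s)$, with no unitarity of the Mellin transform. It also gives a slightly stronger conclusion: the numerical radius of $B-C$, not just its norm, is at least the constant for every compact $C$.

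\textbf{What the paper's route buys.} It yields a reusable domination $\|B\|\ge\sqrt{(\alpha+2)(\alpha+3)}\,\|K_s\|$ and the corresponding essential-norm bound for every profile $F$. Its compactly supported test vectors make uniformity of the asymptotics immediate.

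\textbf{What yours costs.} Your test vectors have infinite support, so you need the iterated limits in $N,\varepsilon,\delta,\eta$ and the bookkeeping you describe. Discarding $n<N/\delta$ costs a fraction $1-\delta^{2\varepsilon}\le2\varepsilon\log(1/\delta)$ of the mass. The monotone sum--integral comparison for $n\ge N/\delta$ has relative error $O_\delta(1/n)$. Together these give $\liminf_N\langle B\hat x,\hat x\rangle\ge\sqrt{(\alpha+2)(\alpha+3)}\,\delta^{2\varepsilon}\int_\delta^{1/\delta}t^{s-1/2-\varepsilon}(1+t)^{-1}\,dt$, and this all checks out.

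\textbf{One cosmetic fix.} Since $C$ may be complex, write $\|B-C\|\ge|\langle(B-C)\hat x,\hat x\rangle|\ge\langle B\hat x,\hat x\rangle-\|C\hat x\|$ instead of dropping the modulus.
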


\begin{proof} Choose a nonzero $F\in C_c(0,\infty)$ and assume that its support is contained in $[a,b]$, where $0<a<b<\infty$.  For $N\geq1$, define
\begin{equation}\label{eq:JN}
 (J_NF)_k=N^{-1/2}F(k/N),\qquad k\geq0.
\end{equation}
Only the indices $aN\leq k\leq bN$  are nonzero. We obtain
\begin{equation}\label{eq:JNnorm}
 \|J_NF\|_{\ell^2}^2 =\frac1N\sum_{k\geq0}|F(k/N)|^2 \rightarrow \int_a^b|F(y)|^2\,dy=\int_0^\infty|F(y)|^2\,dy.
\end{equation}

 Fix $0<c<d<\infty$, take $x\in[c,d]$ and set $n_N=[Nx]$, then
\[
\left|\frac{n_N}{N}-x\right|\leq \frac{C}{N},
\]
for some constant $C>0$. Hence $\frac{n_N}{N}\rightarrow x$ uniformly for $x\in[c,d]$. Similarly,
$\frac{[Ny]}{N}\rightarrow y $ uniformly for $y\in[a,b]$.

For $(x,y)\in[c,d]\times[a,b]$, by the uniform asymptotic formula \eqref{eq:critical-matrix-asymp}, we have
\begin{align*}
N b_{n_N,[Ny]}^{\alpha,\alpha+2}&=N\sqrt{(\alpha+2)(\alpha+3)}\frac{\bigl([Ny]/n_N\bigr)^s}{n_N+[Ny]}\bigl(1+o(1)\bigr)\\
&=\sqrt{(\alpha+2)(\alpha+3)}\frac{\left(\dfrac{[Ny]/N}{n_N/N}\right)^s}{\dfrac{n_N}{N}+\dfrac{[Ny]}{N}}\bigl(1+o(1)\bigr).
\end{align*}
Letting $N\to \infty$. Using the uniform convergence established above, we obtain
\begin{equation}\label{eq:q_N(x,y)}
N b_{n_N,[Ny]}^{\alpha,\alpha+2}\rightarrow\sqrt{(\alpha+2)(\alpha+3)}\frac{(y/x)^s}{x+y}
\end{equation}
uniformly on $[c,d]\times[a,b]$. Using \eqref{eq:JN}, we have
\begin{align*}
N^{1/2}(B^{\alpha,\alpha+2} J_NF)_{n_N} =\sum_{k\geq0}b_{n_Nk}^{\alpha,\alpha+2}F(k/N)  =\frac1N\sum_{k\geq0} \bigl(N b_{n_Nk}^{\alpha,\alpha+2}\bigr)F(k/N).
\end{align*}
Since $F$ is supported in $[a,b]$, the last expression is an ordinary Riemann sum. That is, by \eqref{eq:q_N(x,y)}, it converges uniformly for $x\in[c,d]$ to 
\begin{align*}
\frac1N\sum_{k\geq0}\bigl(N b_{n_Nk}^{\alpha,\alpha+2}\bigr)F(k/N)&\to \sqrt{(\alpha+2)(\alpha+3)} \int_{a}^{b}\frac{(y/x)^s}{x+y}F(y)\, dy\\
&=\sqrt{(\alpha+2)(\alpha+3)} K_sF(x).
\end{align*}
Hence
\begin{equation*}
N^{1/2}(B^{\alpha,\alpha+2} J_NF)_{n_{N}}\rightarrow\sqrt{(\alpha+2)(\alpha+3)} K_sF(x)
\end{equation*}
uniformly for $x\in[c,d]$.

Keeping only the indices $[Nc]\leq n\leq[Nd]$ and using the preceding uniform convergence, we obtain
\begin{align*}
\liminf_{N\to\infty}\|B^{\alpha,\alpha+2} J_NF\|_{\ell^2}^2
&\geq\lim_{N\to\infty}\frac1N\sum_{n=[Nc]}^{[Nd]}\left|N^{1/2}(B^{\alpha,\alpha+2} J_NF)_n\right|^2 \\
&=(\alpha+2)(\alpha+3)\int_c^d |K_sF(x)|^2\,dx.
\end{align*}
Finally, letting $c\downarrow0$ and $d\uparrow\infty$ and applying the monotone convergence theorem gives
\[
\liminf_{N\to\infty}\|B^{\alpha,\alpha+2} J_NF\|_{\ell^2}\geq\sqrt{(\alpha+2)(\alpha+3)}\|K_sF\|_{L^2(0,\infty)}.
\]
After division by \eqref{eq:JNnorm},
\[
 \|B^{\alpha,\alpha+2} \|_{\ell^2 \to \ell^2}\geq \sqrt{(\alpha+2)(\alpha+3)}\frac{\|K_sF\|_{L^{2}(0,\infty)}}{\|F\|_{L^{2}(0,\infty)}}.
\]
The space $C_c(0,\infty)$ is dense in $L^2(0,\infty)$ and $K_s$ is bounded by Lemma \ref{lem:Ks-norm}. Hence, taking the supremum over such $F$, we have 
\begin{equation*}
\|B^{\alpha,\alpha+2} \|_{\ell^2 \to \ell^2}\geq \frac{\pi\sqrt{(\alpha+2)(\alpha+3)}}{\sin(\pi(\alpha+2)/2)}.
\end{equation*}

Observe that the support of $J_NF$ is contained in $[aN,bN]$.  For any $ y\in\ell^2$, we have
\[
 |\langle J_NF,y\rangle| \leq\|J_NF\|_{\ell^2} \left(\sum_{k\geq aN}|y_k|^2\right)^{1/2}\rightarrow0.
\]
Thus $J_NF\rightharpoonup0$. By the triangle inequality,
\begin{align*}
 \liminf_{N\to\infty}\|(B^{\alpha,\alpha+2}-C)J_NF\|_{\ell^2} &\geq \liminf_{N\to\infty}\|B^{\alpha,\alpha+2} J_NF\|_{\ell^2}-\lim_{N\to\infty}\|CJ_NF\|_{\ell^2}\\
 & =\liminf_{N\to\infty}\|B^{\alpha,\alpha+2} J_NF\|_{\ell^2}.
\end{align*}
Hence,
\begin{align*}
\|B^{\alpha,\alpha+2}-C\|_{\ell^2 \to \ell^2}& \geq \liminf_{N\to\infty}\frac{\|(B^{\alpha,\alpha+2}-C)J_NF\|_{\ell^2}}{\|J_NF\|_{\ell^2}}\\
&\geq \liminf_{N\to\infty}\frac{\|B^{\alpha,\alpha+2} J_NF\|_{\ell^2}}{\|J_NF\|_{\ell^2}}\\
&\geq \sqrt{(\alpha+2)(\alpha+3)}\frac{\|K_sF\|_{L^{2}(0,\infty)}}{\|F\|_{L^{2}(0,\infty)}}.
\end{align*}
Therefore, using Lemma \ref{lem:Ks-norm}, we obtain
\begin{equation*}
 \|B^{\alpha,\alpha+2} -C\|_{\ell^2 \to \ell^2}\geq \frac{\pi\sqrt{(\alpha+2)(\alpha+3)}} {\sin(\pi(\alpha+2)/2)}.
\end{equation*}
This completes the proof.
\end{proof}

Recall that the essential norm of a bounded operator $T:X\to Y$ is its distance from the compact operators. 
\begin{equation*} 
 \|T\|_{\ess, X \to Y}=\inf\{\|T-K\|:K:X\to Y\text{ is compact}\}.
\end{equation*}
Thus $\|T\|_{\ess, X \to Y}=0$ exactly when $T$ is compact.  

\begin{theorem}\label{thm:critical}
For $-1<\alpha<0$, 
\begin{equation*}\label{eq:critical-final}
 \|\HD\|_{A^2_\alpha\to A^2_{\alpha+2}} =\|\HD\|_{\mathrm e,A^2_\alpha\to A^2_{\alpha+2}} =\frac{\pi\sqrt{(\alpha+2)(\alpha+3)}}
 {-\sin(\pi\alpha/2)}.
\end{equation*}
\end{theorem}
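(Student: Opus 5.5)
The plan is to squeeze both quantities between the same number. The upper bound comes from the factorization $\HD=D\HH$, and the lower bound from Lemma \ref{lem:discrete-continuous}.

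First, a trigonometric identity reconciles the two forms of the constant. Since $\sin(\pi(\alpha+2)/2)=\sin(\pi+\pi\alpha/2)=-\sin(\pi\alpha/2)$, the target value
\[
\frac{\pi\sqrt{(\alpha+2)(\alpha+3)}}{-\sin(\pi\alpha/2)}
\]
coincides with the constant in Lemma \ref{lem:discrete-continuous}. Note that $-\sin(\pi\alpha/2)>0$ for $-1<\alpha<0$.

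For the upper bound on the norm, we use the proof of Theorem \ref{thm:bdd}, which gives $\HD=D\HH$ on all of $A^2_\alpha$. Lemma \ref{lemma:Hnorm} gives $\|\HH\|_{A^2_\alpha\to A^2_\alpha}=\pi/\sin(\pi(\alpha+2)/2)$, and Lemma \ref{lem:D} gives $\|D\|_{A^2_\alpha\to A^2_{\alpha+2}}=\sqrt{(\alpha+2)(\alpha+3)}$. Submultiplicativity then yields
\[
\|\HD\|_{\mathrm e,A^2_\alpha\to A^2_{\alpha+2}}\le\|\HD\|_{A^2_\alpha\to A^2_{\alpha+2}}\le \frac{\pi\sqrt{(\alpha+2)(\alpha+3)}}{\sin(\pi(\alpha+2)/2)},
\]
where the first inequality holds trivially (take the compact operator $0$).

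For the lower bound on the essential norm, we transfer to $\ell^2$ via the unitaries $U_\alpha$ and $U_{\alpha+2}$ from the proof of Theorem \ref{thm:bdd}, so that $B^{\alpha,\alpha+2}=U_{\alpha+2}\HD U_\alpha^{-1}$. If $K:A^2_\alpha\to A^2_{\alpha+2}$ is compact, then $C=U_{\alpha+2}KU_\alpha^{-1}$ is a compact operator on $\ell^2$, and by unitary invariance $\|\HD-K\|=\|B^{\alpha,\alpha+2}-C\|$. The second part of Lemma \ref{lem:discrete-continuous} bounds this below by the constant. Taking the infimum over $K$ gives
\[
\|\HD\|_{\mathrm e}\ge \frac{\pi\sqrt{(\alpha+2)(\alpha+3)}}{\sin(\pi(\alpha+2)/2)}.
\]
Chaining this with the upper bound forces all three quantities to be equal.

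No step is a serious obstacle, since the hard analysis is already contained in Lemmas \ref{lem:Ks-norm} and \ref{lem:discrete-continuous}. The only point needing care is the transfer of compact operators through the unitary identification, together with the already-established identity $\HD=D\HH$ on the whole space, which the upper bound uses.
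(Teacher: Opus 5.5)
Your proposal is correct and follows the paper's proof essentially step for step: the upper bound comes from $\HD=D\HH$ together with Lemma~\ref{lemma:Hnorm} and Lemma~\ref{lem:D}, the lower bound on the essential norm comes from the second part of Lemma~\ref{lem:discrete-continuous} after transferring to $\ell^2$ via $U_\alpha,U_{\alpha+2}$, and the constants are reconciled by $\sin(\pi(\alpha+2)/2)=-\sin(\pi\alpha/2)$. Your explicit check that $U_{\alpha+2}KU_\alpha^{-1}$ is compact on $\ell^2$ just spells out the step the paper compresses into the identity $\|\HD\|_{\mathrm e}=\|B^{\alpha,\alpha+2}\|_{\mathrm e}$.
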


\begin{proof}
From the proof of Theorem \ref{thm:bdd}, we obtain
\begin{equation*}
\|\mathcal H_2\|_{A_{\alpha}^{2}\to A_{\alpha+2}^{2}}=\|B^{\alpha,\alpha+2}\|_{\ell^2 \to \ell^2}, \qquad
\|\mathcal H_2\|_{\mathrm e,A^2_\alpha\to A^2_{\alpha+2}}=\|B^{\alpha,\alpha+2}\|_{\mathrm e,\ell^2 \to \ell^2}.
\end{equation*} 

The factorization $\mathcal H_2=D\mathcal H$, together with the sharp norm of $\mathcal H$ from Lemma \ref{lemma:Hnorm} and the exact critical norm of $D$ from Lemma \ref{lem:D}, gives
\begin{equation*}
\|\mathcal H_2\|_{A_{\alpha}^{2}\to A_{\alpha+2}^{2}}\leq \|D\|_{A_{\alpha}^{2}\to A_{\alpha+2}^{2}}\,\|\mathcal H\|_{A_{\alpha}^{2}\to A_{\alpha}^{2}}=\frac{\pi\sqrt{(\alpha+2)(\alpha+3)}}
{\sin(\pi(\alpha+2)/2)}.
\end{equation*}

By Lemma \ref{lem:discrete-continuous}, for every compact operator $C$ on the   space $\ell^2$,
\begin{equation*}
\|B^{\alpha,\alpha+2}-C\|_{\ell^2 \to \ell^2}\geq\frac{\pi\sqrt{(\alpha+2)(\alpha+3)}}{\sin(\pi(\alpha+2)/2)}.
\end{equation*}
Taking the infimum over all compact operators $C$ yields
\begin{equation*}
\|B^{\alpha,\alpha+2} \|_{\mathrm e,\ell^2 \to \ell^2}\geq \frac{\pi\sqrt{(\alpha+2)(\alpha+3)}}{\sin(\pi(\alpha+2)/2)}.
\end{equation*}
Therefore, we obtain
\begin{equation*}
\|\mathcal H_2\|_{\mathrm e,A^2_\alpha\to A^2_{\alpha+2}}\geq \frac{\pi\sqrt{(\alpha+2)(\alpha+3)}}{\sin(\pi(\alpha+2)/2)}.
\end{equation*}

Since the essential norm never exceeds the operator norm, that is
\[
\|\mathcal H_2\|_{\mathrm e,A^2_\alpha\to A^2_{\alpha+2}}
\leq\|\mathcal H_2\|_{A_{\alpha}^{2}\to A_{\alpha+2}^{2}}\leq \frac{\pi\sqrt{(\alpha+2)(\alpha+3)}}{\sin(\pi(\alpha+2)/2)},
\]
and hence
\[
\|\mathcal H_2\|_{\mathrm e,A^2_\alpha\to A^2_{\alpha+2}}=\|\mathcal H_2\|_{A_{\alpha}^{2}\to A_{\alpha+2}^{2}}
=\frac{\pi\sqrt{(\alpha+2)(\alpha+3)}}{\sin(\pi(\alpha+2)/2)}.
\]
It is clear that 
$$
\frac{\pi\sqrt{(\alpha+2)(\alpha+3)}}{\sin(\pi(\alpha+2)/2)}=\frac{\pi\sqrt{(\alpha+2)(\alpha+3)}}
 {-\sin(\pi\alpha/2)}.
$$
The proof is complete.
\end{proof}

\begin{remark}  It is worth noting that the equality of the operator norm and
essential norm does not follow merely from the corresponding behavior of $\HH$. 
\end{remark}

\begin{proposition}\label{prop:Gram}
Let $-1<\alpha<0$ and $\beta>\alpha+2$. Define $G^{\alpha,\beta}=(G_{k\ell}^{\alpha,\beta})_{k,\ell\geq0}$ by
\begin{equation}\label{eq:Gram}
 G_{k\ell}^{\alpha,\beta} =\frac1{\sqrt{w_k(\alpha)w_\ell(\alpha)}} \sum_{n=0}^\infty \frac{(n+1)^2w_n(\beta)} {(n+k+1)(n+\ell+1)}.
\end{equation}
Then $G^{\alpha,\beta}$ is the matrix of $\HD^*\HD$ on
$A^2_\alpha$ and
\begin{equation}\label{eq:norm-lambda}
 \|\HD\|_{A^2_\alpha\to A^2_\beta} =\sqrt{\lambda_{\max}(G^{\alpha,\beta})}.
\end{equation}
Equivalently,
\begin{equation}\label{eq:Gram-integral}
 G_{k\ell}^{\alpha,\beta} =\frac1{\sqrt{w_k(\alpha)w_\ell(\alpha)}} \int_0^1\int_0^1x^ky^\ell F_\beta(xy)\,dx\,dy,
\end{equation}
where
\begin{equation*} 
 F_\beta(r)=\sum_{n=0}^\infty(n+1)^2w_n(\beta)r^n.
\end{equation*}
\end{proposition}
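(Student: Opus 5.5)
The plan is to work entirely in the matrix picture from the proof of Theorem \ref{thm:bdd}. There, $\HD$ is unitarily equivalent to $B^{\alpha,\beta}=U_\beta\HD U_\alpha^{-1}$, with entries $b^{\alpha,\beta}_{nk}$ given by \eqref{eq:Bmatrix}. Consequently $\HD^*\HD=U_\alpha^{-1}(B^{\alpha,\beta})^*B^{\alpha,\beta}U_\alpha$, so the matrix of $\HD^*\HD$ in the basis $\{e^{(\alpha)}_k\}$ is $(B^{\alpha,\beta})^*B^{\alpha,\beta}$. Its $(k,\ell)$ entry is
\[
\langle \HD e^{(\alpha)}_\ell,\HD e^{(\alpha)}_k\rangle_{A^2_\beta}=\sum_{n\ge0} b^{\alpha,\beta}_{nk}b^{\alpha,\beta}_{n\ell}
=\frac{1}{\sqrt{w_k(\alpha)w_\ell(\alpha)}}\sum_{n\ge0}\frac{(n+1)^2w_n(\beta)}{(n+k+1)(n+\ell+1)},
\]
where we use that the entries are real and that $\sqrt{w_n(\beta)}^{\,2}=w_n(\beta)$. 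This gives \eqref{eq:Gram}. The series converges absolutely because $(n+1)^2w_n(\beta)/((n+k+1)(n+\ell+1))\asymp n^{-\beta-1}$ by \eqref{eq:wn-asymp}, and $\beta>1$ here since $\beta>\alpha+2>1$. In any case, convergence is guaranteed by boundedness from Theorem \ref{thm:bdd}, because $\HD e_k^{(\alpha)}\in A^2_\beta$.

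For \eqref{eq:norm-lambda}, I would use Theorem \ref{thm:bdd}(2): when $\beta>\alpha+2$ the operator $\HD$ is compact. Hence $\HD^*\HD$ is a compact, positive, self-adjoint operator on $A^2_\alpha$, and $G^{\alpha,\beta}$ is the matrix of a compact positive operator on $\ell^2$. By the spectral theorem for compact self-adjoint operators, $\|G^{\alpha,\beta}\|$ equals its largest eigenvalue $\lambda_{\max}$, which is attained by an eigenvector. This eigenvalue is positive, since $G^{\alpha,\beta}\neq0$: its diagonal entries are positive. Combining this with the $C^*$-identity $\|\HD\|^2=\|\HD^*\HD\|$ gives \eqref{eq:norm-lambda}. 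The only point needing care is to state that $\lambda_{\max}$ means the top of the spectrum, which is an eigenvalue precisely because of compactness. This is the step where the hypothesis $\beta>\alpha+2$ is genuinely used, and it is the main (mild) obstacle.

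For \eqref{eq:Gram-integral}, I would write $\frac{1}{n+k+1}=\int_0^1 x^{n+k}\,dx$ and $\frac{1}{n+\ell+1}=\int_0^1 y^{n+\ell}\,dy$. Substituting into the sum turns each term into $\int_0^1\!\int_0^1 x^k y^\ell (n+1)^2 w_n(\beta)(xy)^n\,dx\,dy$. All terms are nonnegative, so Tonelli's theorem lets us interchange the sum and the double integral, giving $\int_0^1\!\int_0^1 x^ky^\ell F_\beta(xy)\,dx\,dy$. Finiteness of this integral follows from the convergence of the series shown above. Note that $F_\beta(r)$ converges for $r<1$ because $w_n(\beta)$ grows at most polynomially. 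Dividing by $\sqrt{w_k(\alpha)w_\ell(\alpha)}$ finishes the proof.
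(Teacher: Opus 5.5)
Your proposal is correct and follows the paper's proof essentially step for step. You use the unitary matrix picture $B^{\alpha,\beta}=U_\beta\HD U_\alpha^{-1}$ to identify $G^{\alpha,\beta}=(B^{\alpha,\beta})^*B^{\alpha,\beta}$, compactness from Theorem \ref{thm:bdd} with the spectral theorem and $\|B^{\alpha,\beta}\|^2=\|(B^{\alpha,\beta})^*B^{\alpha,\beta}\|$ for the norm, and Tonelli with $\frac{1}{(n+k+1)(n+\ell+1)}=\int_0^1\!\int_0^1 x^{n+k}y^{n+\ell}\,dx\,dy$ for the integral form; the only cosmetic difference is that you justify convergence of the defining series via $w_n(\beta)\asymp n^{-\beta-1}$, whereas the paper uses Cauchy--Schwarz on $B^{\alpha,\beta}e_k,B^{\alpha,\beta}e_\ell\in\ell^2$.
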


\begin{proof} Let
\[ B^{\alpha,\beta}= U_\beta\mathcal H_2U_\alpha^{-1} \]
be the matrix representation of $\mathcal H_2$ on $\ell^2$, where the unitary maps $U_\alpha$ and $U_\beta$ are defined in the proof of Theorem \ref{thm:bdd}.
Denote by $\{e_k\}_{k\geq0}$ the standard orthonormal basis of $\ell^2$. We have
\begin{align*}
\langle \mathcal{H}_{2}^*\mathcal{H}_{2}e_{\ell}^{(\alpha)},e_{k}^{(\alpha)}\rangle_{A_{\alpha}^2} &= \langle \mathcal{H}_{2}^*\mathcal{H}_{2}U_{\alpha}^{-1}e_\ell,U_{\alpha}^{-1}e_k\rangle_{A_{\alpha}^2}
 =\langle \mathcal{H}_{2}e_\ell^{(\alpha)},\mathcal{H}_{2}e_k^{(\alpha)} \rangle_{A_{\beta}^2}\\
&=\langle B^{\alpha,\beta}e_\ell,B^{\alpha,\beta}e_k\rangle_{\ell^2} =\sum_{n=0}^\infty b_{n\ell}^{\alpha,\beta}b_{nk}^{\alpha,\beta}.
\end{align*} 
Since $B^{\alpha,\beta} e_k,B^{\alpha,\beta} e_\ell \in \ell^2$, the Cauchy–Schwarz inequality implies that the above series converges. Since $U_\alpha$ and $U_\beta$ are unitary maps, the operator $G^{\alpha,\beta}=(B^{\alpha,\beta})^*B^{\alpha,\beta}$ is the matrix representation of $\mathcal H_2^*\mathcal H_2$ on $A_\alpha^2$. Substituting \eqref{eq:Bmatrix} into the last series  and comparing coefficients, we obtain \eqref{eq:Gram}. 

Since $-1<\alpha<0$ and $\beta>\alpha+2$, Theorem \ref{thm:bdd} shows that $B^{\alpha,\beta}$ is compact.  Hence $(B^{\alpha,\beta})^*B^{\alpha,\beta}$ is a positive compact self-adjoint operator.  By the spectral theorem, its nonzero spectrum consists of positive eigenvalues converging to zero if infinite in number.  Since $B^{\alpha,\beta} \ne0$,
the number $\|(B^{\alpha,\beta})^*B^{\alpha,\beta}\|_{\ell^2 \to \ell^2}$ is its largest eigenvalue.  Therefore
\[
 \|\HD\|_{A_{\alpha}^2\to A_{\beta}^2}^2=\|B^{\alpha,\beta} \|_{\ell^2 \to \ell^2}^2=\|(B^{\alpha,\beta})^*B^{\alpha,\beta}\|_{\ell^2 \to \ell^2}
 =\lambda_{\max}(G^{\alpha,\beta}),
\]
which proves \eqref{eq:norm-lambda}.

For the integral form, use the elementary identity
\[
 \frac1{(n+k+1)(n+\ell+1)} =\int_0^1\int_0^1x^{n+k}y^{n+\ell}\,dx\,dy
\]
in each summand of \eqref{eq:Gram}.  All summands are nonnegative for $x,y\in(0,1)$, so Tonelli's theorem permits the interchange of the sum
and the two integrals.  The sum remaining inside is
\[
 \sum_{n=0}^\infty(n+1)^2w_n(\beta)(xy)^n=F_\beta(xy),
\]
which is exactly \eqref{eq:Gram-integral}.
\end{proof}

\begin{remark} The   norm formula also describes the behavior at the two endpoints
of the parameter range $-1<\alpha<0$. As $\alpha\downarrow-1$, although $A_{-1}^2$ is not included, the coefficient weights satisfy $w_n(\alpha)\longrightarrow1$, which corresponds to the $H^2$ coefficient norm. Hence, by Theorem \ref{thm:critical},
\[
\lim_{\alpha\downarrow-1}\|\mathcal H_2\|_{A_\alpha^2\to A_{\alpha+2}^2} =\frac{\pi\sqrt{2}}{\sin(\pi/2)}=\pi\sqrt{2}.
\]
Thus the formal endpoint   is the mapping $\mathcal H_2:H^2\to A_1^2$ with limiting norm $\pi\sqrt{2}$.

As $\alpha\uparrow0$,
\[
-\sin\frac{\pi\alpha}{2}\asymp -\frac{\pi\alpha}{2}, \qquad \sqrt{(\alpha+2)(\alpha+3)}\longrightarrow\sqrt{6},
\]
and therefore
\[
\|\mathcal H_2\|_{A_\alpha^2\to A_{\alpha+2}^2} \asymp \frac{2\sqrt{6}}{-\alpha}.
\]
Hence the norm diverges as $\alpha\uparrow0$. This divergence is consistent with the failure of the functional
$
f\longmapsto\int_0^1 f(t)\,dt
$
to be bounded at $\alpha=0$, since this functional is precisely the constant coefficient $(\mathcal H_2f)(0)$.
\end{remark}\medskip

\section{Schatten classes and Schatten norms}
\label{sec:Schatten}

In this section, we prove a statement stronger than compactness.  The proof uses a
rank-one decomposition. This decomposition also shows that if $\beta$ is just slightly larger than $\alpha+2$, then the operator is already trace class. For further results on   Schatten class of general Hilbert-type operator, see \cite{PelaezSeco2018}.

For $0<t<1$, define 
\begin{equation*} 
 v_t=\left(\frac{t^k}{\sqrt{w_k(\alpha)}}\right)_{k\geq0},  \qquad u_t=\left((n+1)\sqrt{w_n(\beta)}t^n\right)_{n\geq0}. \nonumber 
\end{equation*}
We use the convention
\[
 (u_t\otimes v_t)x=\langle x,v_t\rangle_{\ell^2}u_t.
\]
For polynomials, the identity
\[
 \int_0^1(n+1)\sqrt{\frac{w_n(\beta)}{w_k(\alpha)}}  t^{n+k}\,dt=b_{nk}^{\alpha,\beta}
\]
holds for every entry. Therefore 
\begin{equation*} 
 B^{\alpha,\beta}=\int_0^1u_t\otimes v_t\,dt.  
\end{equation*}

\begin{lemma}\label{lem:uvnorms}
Let $-1<\alpha<0$ and $\beta>\alpha+2$.  Then
\begin{equation}\label{eq:nuclear-integral}
 \int_0^1\|u_t\|_{\ell^2}\|v_t\|_{\ell^2}\,dt<\infty.
\end{equation}
\end{lemma}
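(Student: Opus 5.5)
We compute the two norms explicitly as power series in $t$ and then estimate their behaviour as $t\to1^-$. By definition,
\[
\|v_t\|_{\ell^2}^2=\sum_{k\ge0}\frac{t^{2k}}{w_k(\alpha)},\qquad
\|u_t\|_{\ell^2}^2=\sum_{n\ge0}(n+1)^2w_n(\beta)t^{2n}.
\]
Both series converge for $0<t<1$, since their coefficients grow at most polynomially by \eqref{eq:wn-asymp}. The integrand in \eqref{eq:nuclear-integral} is therefore continuous on $[0,1)$. Near $t=0$ it is bounded, so the only issue is integrability at $t=1$.

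For the growth near $t=1$ we use Lemma \ref{lem:monomial}. For $k\ge1$,
\[
1/w_k(\alpha)\asymp k^{\alpha+1},\qquad (n+1)^2w_n(\beta)\asymp n^{1-\beta}.
\]
We then apply the standard estimate: for $0<r<1$,
\[
\sum_{n\ge1}n^{\gamma}r^{n}\asymp (1-r)^{-\gamma-1}\ \text{ if }\gamma>-1,\qquad
\asymp\log\frac{e}{1-r}\ \text{ if }\gamma=-1,
\]
and the sum is bounded if $\gamma<-1$. This can be checked by comparing with $\Gamma(\gamma+1)(1-r)^{-\gamma-1}$, either through the integral $\int_0^\infty x^\gamma r^x\,dx$ or through the binomial series $(1-r)^{-\gamma-1}=\sum\frac{\Gamma(n+\gamma+1)}{\Gamma(\gamma+1)n!}r^n$ combined with \eqref{eq:standard gamma-quotient}. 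Taking $r=t^2$ and using $1-t^2\asymp1-t$, we get:
\begin{itemize}
\item $\|v_t\|_{\ell^2}\asymp(1-t)^{-(\alpha+2)/2}$, because $\alpha+1>-1$;
\item $\|u_t\|_{\ell^2}\lesssim(1-t)^{-(2-\beta)/2}$ if $\beta<2$, $\|u_t\|_{\ell^2}\lesssim\log^{1/2}\frac{e}{1-t}$ if $\beta=2$, and $\|u_t\|_{\ell^2}$ bounded if $\beta>2$.
\end{itemize}

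Multiplying these bounds gives three cases.
\begin{itemize}
\item If $\beta<2$, the product is $\lesssim(1-t)^{-(\alpha+4-\beta)/2}$. It is integrable near $1$ iff $(\alpha+4-\beta)/2<1$, that is, iff $\beta>\alpha+2$, which is exactly our hypothesis.
\item If $\beta=2$, the product is $\lesssim(1-t)^{-(\alpha+2)/2}\log^{1/2}\frac{e}{1-t}$. Since $\alpha<0$ gives $(\alpha+2)/2<1$, this is integrable.
\item If $\beta>2$, the product is $\lesssim(1-t)^{-(\alpha+2)/2}$, again integrable because $\alpha<0$.
\end{itemize}
This proves \eqref{eq:nuclear-integral}.

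The main point requiring care is the Abelian-type estimate for the power series, especially the borderline exponent $\gamma=-1$ when $\beta=2$ and the convergent case $\beta>2$. In the $\beta=2$ case one cannot simply quote a power of $(1-t)$; the logarithmic bound has to be used instead. Everything else follows from Lemma \ref{lem:monomial}.
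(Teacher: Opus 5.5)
Your proposal is correct and follows essentially the same route as the paper. The paper also writes $\|v_t\|_{\ell^2}^2$ and $\|u_t\|_{\ell^2}^2=F_\beta(t^2)$ as power series; it gets the exact closed form $\|v_t\|_{\ell^2}^2=(1-t^2)^{-\alpha-2}$ from the binomial series, estimates $F_\beta$ near $r=1$ in the three cases $1<\beta<2$, $\beta=2$, $\beta>2$, and checks the same power/logarithm integrability.

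One small imprecision: your two-sided estimate $\sum_{n\ge1}n^\gamma r^n\asymp(1-r)^{-\gamma-1}$ holds as $r\to1^-$, not on all of $(0,1)$. This does not affect the argument, since you only use the upper bound near $t=1$.
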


\begin{proof}
Since $t<1$, all series below converge absolutely.  Using \eqref{eq:wnexact} in Lemma \ref{lem:monomial},  a simple computation gives
\[
 \|v_t\|_{\ell^2}^2 =\sum_{k=0}^\infty\frac{t^{2k}}{w_k(\alpha)} =\sum_{k=0}^\infty\frac{(\alpha+2)_k}{k!}t^{2k} =(1-t^2)^{-\alpha-2}.
\]
By direct computation,
\begin{equation*} 
 \|u_t\|_{\ell^2}^2=F_\beta(t^2).
\end{equation*}
For all $n\geq1$, by \eqref{eq:wn-asymp}, we have 
\begin{equation*} 
(n+1)^2w_n(\beta) \asymp (n+1)^{1-\beta}.
\end{equation*}
In order to prove \eqref{eq:nuclear-integral}, we only need to consider the behavior of $F_\beta(r)$ near the boundary. Put $\delta=1-r$. 

 Assume $1<\beta<2$. Since $r^n=(1-\delta)^n =\exp\bigl(n\log(1-\delta)\bigr)$ and $\log(1-\delta)\geq-C\delta$ for sufficiently small $\delta$, it follows that $r^n\geq e^{-C}>0$ uniformly for $1\leq n\leq  \delta^{-1}$. Therefore,
\begin{equation*}
F_\beta(r) \gtrsim \sum_{1\leq n\leq\delta^{-1} }n^{1-\beta} \gtrsim \int_1^{\delta^{-1}}x^{1-\beta}\,dx \asymp \delta^{\beta-2}.
\end{equation*}
For the upper estimate, use $r^n\leq e^{-n(1-r)}$, we obtain
\[
F_\beta(r)\lesssim\sum_{n=1}^{\infty}n^{1-\beta}e^{-n\delta}.
\]
For $n\ge 1$ and for all $x\in [n-1,n]$, we have $n^{1-\beta}e^{-n\delta}\le x^{1-\beta}e^{-x\delta}$. Therefore,
\[
F_\beta(r)\lesssim\sum_{n=1}^{\infty}n^{1-\beta}e^{-n\delta}\leq\int_0^\infty x^{1-\beta}e^{-\delta x}\,dx=\Gamma(2-\beta)\delta^{\beta-2}.
\]
For  $\beta=2$, we have
\begin{align*}
F_2(r)=1+\sum_{n=1}^{\infty}(n+1)^2w_n(2)r^n&=1+\sum_{n=1}^{\infty}\frac{6(n+1)}{(n+2)(n+3)}r^n\\
&\asymp 1+\sum_{n=1}^{\infty}\frac{r^n}{n}=\log\frac{e}{1-r}.
\end{align*}
For $\beta>2$, we have
\[
F_\beta(r)=\sum_{n=0}^{\infty}(n+1)^2w_n(\beta)r^n\lesssim\sum_{n=0}^{\infty}(n+1)^{1-\beta}r^n  \le \sum_{n=0}^{\infty}(n+1)^{1-\beta}.
\]
Hence,
\[
F_\beta(1)=\sum_{n=0}^{\infty}(n+1)^2w_n(\beta)<\infty .
\]
Moreover, all coefficients of $F_\beta(r)$ are nonnegative, and hence $F_\beta(r)$ is increasing in $r$. By the monotone convergence theorem,
\[
\lim_{r\rightarrow1^-}F_\beta(r)=\sum_{n=0}^{\infty}(n+1)^2w_n(\beta)=F_\beta(1).
\]
Since the first term equals $1$, we have $F_\beta(r)\ge 1$ and therefore
$
F_\beta(r)\asymp1$ as $r\rightarrow1^- .$ Consequently, as $r\to1^-$,
\begin{equation*} 
 F_\beta(r)\asymp
 \begin{cases}
  (1-r)^{\beta-2},&1<\beta<2,\\
  \log\dfrac{e}{1-r},&\beta=2,\\
  1,&\beta>2.
 \end{cases}
\end{equation*}

On every interval $[0,t_0]$ with $t_0<1$, both norms in \eqref{eq:nuclear-integral} are bounded and
continuous, so only the limit $t\to1^-$  needs to be considered.  Since $1-t^2\asymp1-t$, if $1<\beta<2$ the integrand is comparable near one to
\[
 (1-t)^{(\beta-2)/2}(1-t)^{-(\alpha+2)/2} =(1-t)^{(\beta-\alpha-4)/2},
\]
which is integrable exactly when $\beta>\alpha+2$.  If $\beta=2$, the integrand is comparable   to 
\[
 (1-t)^{-(\alpha+2)/2} \left(\log\frac{e}{1-t}\right)^{1/2}
\]
up to fixed constants. Since $(\alpha+2)/2<1$, the expression is integrable, and a logarithmic factor does not change a strict power inequality.  If $\beta>2$, the square root of $F_\beta(t^2)$ stays bounded and the same power test applies.  This proves \eqref{eq:nuclear-integral} in all cases.
\end{proof}

\begin{theorem}\label{thm:trace} For $\alpha,\beta>-1$, the following statements are equivalent. 
\begin{enumerate}
\item $\HD:A^2_\alpha\to A^2_\beta$ is trace class;
\item $\HD\in\Sp_p(A^2_\alpha,A^2_\beta)$ for every $1\leq p<\infty$;
\item $-1<\alpha<0$ and $\beta>\alpha+2$.
\end{enumerate}
\end{theorem}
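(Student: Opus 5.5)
The argument runs around the cycle (3)$\Rightarrow$(1)$\Rightarrow$(2)$\Rightarrow$(3), working throughout with the matrix $B^{\alpha,\beta}=U_\beta\HD U_\alpha^{-1}$ on $\ell^2$. Since $U_\alpha,U_\beta$ are unitary, Schatten membership and Schatten norms of $\HD$ coincide with those of $B^{\alpha,\beta}$.

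For (3)$\Rightarrow$(1), assume $-1<\alpha<0$ and $\beta>\alpha+2$. I will use the rank-one representation
\[
B^{\alpha,\beta}=\int_0^1 u_t\otimes v_t\,dt .
\]
Each $u_t\otimes v_t$ has trace norm $\|u_t\|_{\ell^2}\|v_t\|_{\ell^2}$, and by Lemma \ref{lem:uvnorms} these norms are integrable on $(0,1)$. The map $t\mapsto u_t\otimes v_t$ is continuous into $\Sp_1$ on every $[0,t_0]$ with $t_0<1$, because $u_t,v_t$ depend continuously on $t$ in $\ell^2$ there. Hence the integral converges as a Bochner integral in the Banach space $\Sp_1$, with
\[
\|T\|_{\Sp_1}\le\int_0^1\|u_t\|\|v_t\|\,dt<\infty .
\]
I then need to check that this $\Sp_1$-valued integral $T$ equals $B^{\alpha,\beta}$. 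Pairing with $e_n,e_k$ is continuous on $\Sp_1$, so $\langle Te_k,e_n\rangle=\int_0^1(u_t)_n(v_t)_k\,dt=b^{\alpha,\beta}_{nk}$. A bounded operator is determined by its matrix entries, so $T=B^{\alpha,\beta}$ and $\HD$ is trace class. The main point needing care here is this identification, in particular that the Bochner integral is legitimate near $t=1$. Integrability of the norms, guaranteed by Lemma \ref{lem:uvnorms}, is exactly what is required, and measurability is immediate from continuity on compact subintervals.

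For (1)$\Rightarrow$(2), I use the inclusions $\Sp_1\subset\Sp_p$ with $\|T\|_{\Sp_p}\le\|T\|_{\Sp_1}$ for $p\ge1$. These follow from $s_j\le s_1\le\|T\|_{\Sp_1}$, which gives $\sum_j s_j^p\le s_1^{p-1}\sum_j s_j$.

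For (2)$\Rightarrow$(3), membership in any $\Sp_p$ (even just $p=1$ suffices) implies compactness, and Theorem \ref{thm:bdd}(2) then yields $-1<\alpha<0$ and $\beta>\alpha+2$. This closes the cycle; Corollary \ref{cor:same-space} is the special case $\beta=\alpha$ and is consistent with it.
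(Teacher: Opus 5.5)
Your proposal is correct and follows the paper's proof essentially step for step: the same rank-one Bochner-integral representation $B^{\alpha,\beta}=\int_0^1 u_t\otimes v_t\,dt$ in $\Sp_1$, with Lemma \ref{lem:uvnorms} supplying integrability, continuity of $t\mapsto u_t\otimes v_t$ on $[0,1)$ giving measurability, and matrix entries identifying the integral; the other two implications use the trivial inclusion and the compactness criterion of Theorem \ref{thm:bdd}, as in the paper. The only cosmetic differences are that you justify $\Sp_1\subset\Sp_p$ explicitly via $\sum_j s_j^p\le s_1^{p-1}\sum_j s_j$, and you infer compactness from membership in any $\Sp_p$ rather than specifically from $\Sp_2$.
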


\begin{proof} $(1)\Rightarrow(2)$. It is clear.

$(2)\Rightarrow(3)$.  If (2) holds, then in particular $\HD\in\Sp_2$.  Every Schatten operator with finite
exponent is compact, so the compactness condition in Theorem \ref{thm:bdd} forces $-1<\alpha<0$ and $\beta>\alpha+2$.

$(3)\Rightarrow (1)$. Since  $u_t\otimes v_t$ is a rank-one operator, we get 
\begin{equation}\label{eq:rank-one-trace-norm}
 \|u_t\otimes v_t\|_{\Sp_1}=\|u_t\|_{\ell^2}\|v_t\|_{\ell^2}.
\end{equation}
We first justify strong measurability, which is needed for the Bochner integral. Fix $0<r<1$. For $s,t\in[0,r]$, we first consider the vector-valued map
$t\mapsto v_t$. By definition,
\[
\|v_t-v_s\|_{\ell^2}^{2}=\sum_{n=0}^{\infty}\frac{|t^n-s^n|^2}{w_n(\alpha)} .
\]
For every fixed $n$, $|t^n-s^n|\to 0$ as $t\to s$. Moreover, 
\[
\frac{|t^n-s^n|^2}{w_n(\alpha)}\leq\frac{4r^{2n}}{w_n(\alpha)} .
\]
and the series $\sum_{n=0}^{\infty}\frac{4r^{2n}}{w_n(\alpha)}$ is convergent. Therefore, the dominated convergence theorem for series yields $\|v_t-v_s\|_{\ell^2}\rightarrow0$ as $t\to s$.
Similarly,
\[
\|u_t-u_s\|_{\ell^2}^{2}=\sum_{n=0}^{\infty}(n+1)^2w_n(\beta)|t^n-s^n|^2 .
\]
Since
\[
(n+1)^2w_n(\beta)|t^n-s^n|^2\leq4(n+1)^2w_n(\beta)r^{2n}
\]
and
\[
\sum_{n=0}^{\infty}(n+1)^2w_n(\beta)r^{2n}=F_\beta(r^2)<\infty ,
\]
another application of dominated convergence gives
$
\|u_t-u_s\|_{\ell^2}\rightarrow0  
$
as $t\to s$. Hence both maps $t\mapsto v_t$ and $t\mapsto u_t$ are norm-continuous on $[0,r]$, and therefore on $[0,1)$.

For rank-one operators,
\begin{align*}
 \|u_t\otimes v_t-u_s\otimes v_s\|_{\Sp_1}&=\|(u_t-u_s)\otimes v_t+u_s\otimes (v_t-v_s)\|_{\Sp_1}\\
 &\leq\|(u_t-u_s)\otimes v_t\|_{\Sp_1}
      +\|u_s\otimes(v_t-v_s)\|_{\Sp_1}\\
 &=\|u_t-u_s\|_{\ell^2}\,\|v_t\|_{\ell^2}+\|u_s\|_{\ell^2}\,\|v_t-v_s\|_{\ell^2}.
\end{align*}
Thus $t\mapsto u_t\otimes v_t$ is norm-continuous from $[0,1)$ into the separable Banach space $\Sp_1$, and in particular it is strongly
measurable.  By Lemma \ref{lem:uvnorms} and \eqref{eq:rank-one-trace-norm}, its $\Sp_1$-norm is integrable.  Therefore
\[
 T=\int_0^1u_t\otimes v_t\,dt
\]
exists as a Bochner integral in the Banach space $\Sp_1$, and the triangle inequality gives
\[
 \|T\|_{\Sp_1} \leq\int_0^1\|u_t\|\|v_t\|\,dt.
\]

We verify that $T$ is precisely the matrix of $\HD$.  Let $\left \{ e_k \right \}_{k\ge 0} $ be standard coordinate vectors in $\ell^2$.  Since taking a matrix entry is a bounded
linear functional on $\Sp_1$, it may be passed through the Bochner integral.  Thus
\begin{align*}
 \langle Te_k,e_n\rangle_{\ell^2}  &=\int_0^1   \langle (u_t\otimes v_t)e_k,e_n\rangle_{\ell^2}\,dt\\
 &=\int_0^1(n+1)   \sqrt{\frac{w_n(\beta)}{w_k(\alpha)}}t^{n+k}\,dt\\
 &=\frac{n+1}{n+k+1}   \sqrt{\frac{w_n(\beta)}{w_k(\alpha)}}
 =b_{nk}^{\alpha,\beta}.
\end{align*}
Hence $T$ and $B^{\alpha,\beta}$ agree on every finitely supported sequence.  Both are bounded, and since these sequences are dense in $\ell^2$,
we have $T=B^{\alpha,\beta}$.  This proves that $\HD\in\Sp_1$. The proof is complete.
\end{proof}

\begin{corollary}\label{prop:Sq-exact}
Assume $-1<\alpha<0$ and $\beta>\alpha+2$. $G^{\alpha,\beta} =(B^{\alpha,\beta})^*B^{\alpha,\beta}$ is a positive compact operator on $\ell^2$ and is unitarily equivalent to $\mathcal H_2^*\mathcal H_2$.  Let
\begin{equation*}
\lambda_1(G^{\alpha,\beta})\geq\lambda_2(G^{\alpha,\beta})\geq\cdots\geq0
\end{equation*}
denote its eigenvalues, repeated according to multiplicity. Then, for every
$1\leq p<\infty$,
\begin{equation}\label{eq:Sq-spectral-exact}
\|\mathcal H_2\|_{\Sp_p}^p=\sum_{j=1}^{\infty}\lambda_j(G^{\alpha,\beta})^{p/2}.
\end{equation}
\end{corollary}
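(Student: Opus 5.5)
The plan is to reduce everything to the matrix picture already set up in the proof of Theorem \ref{thm:bdd} and then apply the definition of the Schatten norm through singular values. Recall that $B^{\alpha,\beta}=U_\beta\HD U_\alpha^{-1}$ with $U_\alpha,U_\beta$ unitary. Hence
\[
(B^{\alpha,\beta})^*B^{\alpha,\beta}=U_\alpha\HD^*U_\beta^{-1}U_\beta\HD U_\alpha^{-1}=U_\alpha\HD^*\HD U_\alpha^{-1},
\]
which gives the unitary equivalence with $\HD^*\HD$. By Proposition \ref{prop:Gram}, this operator has the matrix $G^{\alpha,\beta}$ from \eqref{eq:Gram}, so $G^{\alpha,\beta}=(B^{\alpha,\beta})^*B^{\alpha,\beta}$.

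Next I verify positivity and compactness. Positivity is the identity $\langle G^{\alpha,\beta}x,x\rangle=\|B^{\alpha,\beta}x\|^2\ge0$. Compactness follows because $B^{\alpha,\beta}$ is compact by Theorem \ref{thm:bdd}(2) under the hypotheses $-1<\alpha<0$, $\beta>\alpha+2$, and a product with a bounded operator stays compact. (Alternatively, $B^{\alpha,\beta}\in\Sp_1$ by Theorem \ref{thm:trace}.) The spectral theorem for compact self-adjoint operators then gives an eigenvalue list $\lambda_j(G^{\alpha,\beta})\ge0$, nonincreasing and repeated according to multiplicity. If the range is finite-dimensional, the list is padded with zeros.

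The key identification concerns the singular values. By definition, $s_j(\HD)$ are the eigenvalues of $|\HD|=(\HD^*\HD)^{1/2}$. Since unitary equivalence preserves spectra with multiplicities and commutes with the continuous functional calculus, $|\HD|$ is unitarily equivalent to $(G^{\alpha,\beta})^{1/2}$. By the spectral mapping for the square root of a positive compact operator, its eigenvalues are $\lambda_j(G^{\alpha,\beta})^{1/2}$ with the same multiplicities and ordering. Therefore $s_j(\HD)=\lambda_j(G^{\alpha,\beta})^{1/2}$, and substituting this into the definition of $\|\cdot\|_{\Sp_p}$ yields \eqref{eq:Sq-spectral-exact}. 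Finiteness of the sum for all $1\le p<\infty$ follows from Theorem \ref{thm:trace}.

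The only delicate point is bookkeeping with multiplicities and the kernel. Zero eigenvalues contribute nothing to the sum, and every nonzero eigenvalue of a compact positive operator has finite multiplicity, so the ordering is well defined. Beyond this the argument is routine.
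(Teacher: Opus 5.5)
Your proposal is correct and follows essentially the same route as the paper: identify $\HD^*\HD$ with $G^{\alpha,\beta}=(B^{\alpha,\beta})^*B^{\alpha,\beta}$ via the unitaries $U_\alpha,U_\beta$, use the continuous functional calculus to get $s_j(\HD)=\lambda_j(G^{\alpha,\beta})^{1/2}$, substitute into the definition of $\|\cdot\|_{\Sp_p}$, and invoke Theorem \ref{thm:trace} for finiteness. You spell out the unitary equivalence, positivity, compactness and multiplicity bookkeeping more explicitly than the paper, which leaves those points to Proposition \ref{prop:Gram}.
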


\begin{proof} By definition, the singular values of $\HD$ are the eigenvalues of $|\HD|=(\HD^*\HD)^{1/2}$, counted with multiplicity.  The continuous
functional calculus for the positive compact operator $G^{\alpha,\beta}$ shows that the eigenvalues of its square root are
$\lambda_j(G^{\alpha,\beta})^{1/2}$.  Hence
\[
 s_j(\HD)=\lambda_j(G^{\alpha,\beta})^{1/2}.
\]
The definition $\|\HD\|_{\Sp_p}^p=\sum_js_j(\HD)^p$ now gives \eqref{eq:Sq-spectral-exact}.  Convergence for every $p\geq1$ follows
from Theorem \ref{thm:trace}. 
\end{proof}

Next, we compute the Hilbert--Schmidt norm of the operator $\mathcal{H}_2:A_{\alpha}^2\to A_{\beta}^2$.

\begin{theorem}\label{thm:HS}
Let $-1<\alpha<0$ and $\beta>\alpha+2$. Then the Hilbert--Schmidt norm of the operator $\mathcal{H}_2:A_{\alpha}^2\to A_{\beta}^2$ is given by
\begin{equation}\label{eq:HS-double}
 \|\HD\|_{\Sp_2}^2 =\sum_{n,k\geq0} \frac{(n+1)^2}{(n+k+1)^2} \frac{\Gamma(\beta+2)\Gamma(n+1)}{\Gamma(n+\beta+2)}
 \frac{\Gamma(k+\alpha+2)} {\Gamma(\alpha+2)\Gamma(k+1)}.
\end{equation}
Equivalently,
\begin{equation*} 
 \|\HD\|_{\Sp_2}^2 =\int_0^1(-\log r)(1-r)^{-\alpha-2}F_\beta(r)\,dr.
\end{equation*}
\end{theorem}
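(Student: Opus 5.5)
The plan is to compute the Hilbert--Schmidt norm directly from the matrix $B^{\alpha,\beta}$ of Lemma~\ref{lem:matrix}. Since $U_\alpha$ and $U_\beta$ are unitary and $B^{\alpha,\beta}=U_\beta\HD U_\alpha^{-1}$, we have $\|\HD\|_{\Sp_2}=\|B^{\alpha,\beta}\|_{\Sp_2}$. Theorem~\ref{thm:trace} shows that $\HD\in\Sp_1\subset\Sp_2$, so the norm is finite. Apply the identity $\|T\|_{\Sp_2}^2=\sum_k\|Te_k\|^2$ with the orthonormal basis $\{e_k^{(\alpha)}\}$. This gives
\[
\|\HD\|_{\Sp_2}^2=\sum_{n,k\ge0}\bigl(b_{nk}^{\alpha,\beta}\bigr)^2=\sum_{n,k\ge0}\frac{(n+1)^2}{(n+k+1)^2}\,\frac{w_n(\beta)}{w_k(\alpha)}.
\]
Substituting \eqref{eq:wnexact} for $w_n(\beta)$ and $1/w_k(\alpha)=\Gamma(k+\alpha+2)/(\Gamma(\alpha+2)\Gamma(k+1))$ yields \eqref{eq:HS-double} at once.

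For the integral form, start from the elementary identity
\[
\frac1{(n+k+1)^2}=\int_0^1 r^{n+k}(-\log r)\,dr.
\]
It follows from $\int_0^1 r^{m}(-\log r)\,dr=(m+1)^{-2}$, which one gets by differentiating $\int_0^1 r^{m}\,dr=(m+1)^{-1}$ in $m$ or by substituting $r=e^{-u}$. Insert this into each term of the double series. Every term is nonnegative on $(0,1)$, so Tonelli's theorem allows the sums to be interchanged with the integral. The integrand then factors as
\[
(-\log r)\Bigl(\sum_{n\ge0}(n+1)^2w_n(\beta)r^n\Bigr)\Bigl(\sum_{k\ge0}\frac{r^k}{w_k(\alpha)}\Bigr).
\]
The first sum is $F_\beta(r)$ by definition. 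The second is the binomial series $\sum_k\frac{(\alpha+2)_k}{k!}r^k=(1-r)^{-\alpha-2}$, computed as in the proof of Lemma~\ref{lem:uvnorms} with $t^2$ replaced by $r$. This gives the stated integral formula.

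I do not expect a real obstacle, since both steps are identities between nonnegative quantities. Tonelli's theorem makes the interchange legitimate whether or not the series converges, and finiteness comes from Theorem~\ref{thm:trace}. As a sanity check, finiteness can also be seen directly from the asymptotics of $F_\beta$ in Lemma~\ref{lem:uvnorms}. Near $r=1$ we have $-\log r\asymp 1-r$, so the integrand is $\asymp(1-r)^{-\alpha-1}F_\beta(r)$, which is integrable when $\beta>\alpha+2$. Near $r=0$ the factor $-\log r$ is integrable.
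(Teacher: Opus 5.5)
Your proposal is correct and follows the paper's route. You apply Parseval over the basis $\{e_k^{(\alpha)}\}$ to get $\sum_{n,k}(b_{nk}^{\alpha,\beta})^2$ and substitute \eqref{eq:wnexact}; for the integral form you use $\frac{1}{(n+k+1)^2}=\int_0^1 r^{n+k}(-\log r)\,dr$, then Tonelli, then the binomial series $\sum_k r^k/w_k(\alpha)=(1-r)^{-\alpha-2}$. Your extra finiteness check from the boundary asymptotics of $F_\beta$ is a valid independent confirmation that the paper does not include.
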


\begin{proof} Since $-1<\alpha<0$ and $\beta>\alpha+2$,  it follows from Theorem \ref{thm:trace} that $\mathcal{H}_{2}$ is Hilbert--Schmidt. Let $\left \{ e_k^{(\alpha)} \right \}_{k\ge 0} $ be the orthonormal basis in \eqref{eq:orthonormal-basis}.  By the definition of the Hilbert--Schmidt norm and Parseval's identity, we have
\begin{align*}
 \|\HD\|_{\Sp_2}^2 &=\sum_{k=0}^\infty   \|\HD e_k^{(\alpha)}\|_{A^2_\beta}^2=\sum_{k=0}^\infty\sum_{n=0}^\infty   |b_{nk}^{\alpha,\beta}|^2\\
 &=\sum_{n,k\geq0}   \frac{(n+1)^2}{(n+k+1)^2}   \frac{w_n(\beta)}{w_k(\alpha)}.
\end{align*}
Since all terms involved are nonnegative, Tonelli's theorem allows us to change the order of summation without affecting the value of the series.  Substitution \eqref{eq:wnexact} into the above expression gives \eqref{eq:HS-double}.

It is easy to check that 
\[
 \frac1{(n+k+1)^2}=\int_0^1r^{n+k}(-\log r)\,dr.
\] 
 We substitute this identity into the Hilbert--Schmidt norm. Since all terms are nonnegative, by Tonelli’s theorem,
\begin{align*}
\|\mathcal H_2\|_{\Sp_2}^2 &=\int_0^1(-\log r)\left(\sum_{k=0}^{\infty}\frac{r^k}{w_k(\alpha)}\right)\left(\sum_{n=0}^{\infty}(n+1)^2w_n(\beta)r^n\right)\,dr \\
&=\int_0^1(-\log r)(1-r)^{-\alpha-2}F_\beta(r)\,dr .
\end{align*}
This completes the proof.
\end{proof}

\bigskip

\noindent\textbf{Data Availability.} Data sharing is not applicable for this article as no datasets were generated or analyzed during the current study.
\medskip

 \noindent\textbf{Funding.}   Not applicable.
 
 \bigskip

\noindent\textbf{Conflict of interest.} The authors declare no competing interests.

\end{document}